\newtheorem{theorem}{Theorem}[section]
\newtheorem{proposition}[theorem]{Proposition}
\newtheorem{remark}{Remark}[section]
\newtheorem{corollary}[theorem]{Corollary}
\newtheorem{lemma}[theorem]{Lemma}
\newtheorem{question}[theorem]{Question}
\newtheorem{definition}[theorem]{Definition}
\newtheorem*{definition*}{Definition}
\def\mike#1{\noindent
\textcolor{green}
{\textsc{(Mike:}
\textsf{#1})}}
\crefname{equation}{}{}
\crefname{figure}{{\sc Figure}}{{\sc Figure}}
\crefname{subsection}{Subsection}{Subsections}
\begin{document}

\title{VC-dimension and pseudo-random graphs}
\author{Thang Pham\thanks{University of Science, Vietnam National University, Hanoi. Email: phamanhthang.vnu@gmail.com}\and Steven Senger\thanks{Department of Mathematics, Missouri State University. Email: StevenSenger@MissouriState.edu}\and Michael Tait\thanks{Department of Mathematics and Statistics, Villanova University. Email: michael.tait@villanova.edu}\and Nguyen Thu-Huyen\thanks{Fulbright University Vietnam. Email: huyen.nguyen.190033@student.fulbright.edu.vn}}
\maketitle
\begin{abstract}
Let $G$ be a graph and $U\subset V(G)$ be a set of vertices. For each $v\in U$, let $h_v\colon U\to \{0, 1\}$ be the function defined by
\[h_v(u)=\begin{cases} &1 ~\mbox{if}~u\sim v, u\in U\\&0 ~\mbox{if}~u\not\sim v, u\in U\end{cases},\]
and set $\mathcal{H}(U):=\{h_v\colon v\in U\}$. The first purpose of this paper is to study the following question: What families of graphs $G$ and what conditions on $U$ do we need so that the VC-dimension of $\mathcal{H}(U)$ can be determined? We show that if $G$ is a pseudo-random graph, then under some mild conditions, the VC dimension of $\mathcal{H}(U)$ can be bounded from below. Specific cases of this theorem recover and improve previous results on VC-dimension of functions defined by the well-studied distance and dot-product graphs over a finite field.
\end{abstract}
\section{Introduction}
In this paper, we study the VC-dimension of sets of functions that are defined by graph adjacency. We start with the requisite definitions.
\begin{definition}
Let $\mathcal{H}$ be a collection of functions from $V$ to $\{0, 1\}$. We say that $\mathcal{H}$ shatters a finite set $X\subset V$ if the restriction of $\mathcal{H}$ to $X$ yields every possible function from $X$ to $\{0, 1\}$.
\end{definition}

\begin{definition}
Let $\mathcal{H}$ be a collection of functions from $V$ to $\{0, 1\}$. The Vapnik-Chervonenkis dimension (in short, VC-dimension) of $\mathcal{H}$ is $d$ if any only if there exists a set $X\subset V$ of size $d$ that is shattered by $\mathcal{H}$, and no subset of $X$ of size $d+1$ is shattered by $\mathcal{H}$.
\end{definition}

It is mentioned in the book \cite{matousek2013lectures} that ``the VC-dimension can be determined without great difficulty in several simple cases, such as for half-spaces or balls in $\mathbb{R}^d$, but for only slightly more complicated families its computation becomes challenging." 
Before stating our main results, we need to set up notation as follows.

Let $G=(V, E)$ be a graph with the vertex set $V$ and the edge set $E$. For two vertices $u, v\in V$, by $u\sim v$, we mean there is an edge between $u$ and $v$. 

Let $U\subset V$ be a set of vertices. For each $v\in U$, let $h_v\colon U\to \{0, 1\}$ be the function defined by
\[h_v(u)=\begin{cases} &1 ~\mbox{if}~u\sim v, u\in U\\&0 ~\mbox{if}~u\not\sim v, u\in U\end{cases},\]
and set
\[\mathcal{H}(U):=\{h_v\colon v\in U\}.\]
The following question appears to us to be natural and fundamental. 
\begin{question}\label{qs}
For which families of graphs and under what conditions on $U$ can we determine the VC-dimension of $\mathcal{H}(U)$?
\end{question}

One of the primary motivations for this question comes from a series of recent papers \cite{VC1, VC2, IMS, MSW} in which the authors studied graphs $G$ with $V(G)=\mathbb{F}_q^t$, where $\mathbb{F}_q$ is the finite field of order $q$, and there is an edge between two vertices $x$ and $y$ if $P(x, y)=1$ for some polynomials $P$. More precisely, if $t=2$ and $P(x, y)=(x_1-y_1)^2+(x_2-y_2)^2$, then Fitzpatrick, Iosevich, McDonald, and Wyman \cite{VC2} showed that for any subset $U\subset \mathbb{F}_q^2$ of size at least $Cq^{15/8}$, the set $\mathcal{H}(U)$ has VC-dimension equal to $3$. They also showed that for $|U| \geq Cq^{3/2}$ the set $\mathcal{H}(U)$ has VC-dimension at least $2$. They asked specifically about the gap between these two exponents and if one can determine the VC-dimension for sets of size $o(q^{15/8})$. When $t=3$ and $P(x, y)=x_1y_1+x_2y_2+x_3y_3$, Iosevich, McDonald, and Sun \cite{IMS} showed that in this graph any subset $U\subset \mathbb{F}_q^3$ of size at least $Cq^{11/4}$ has the property that $\mathcal{H}(U)$ has VC-dimension equal to $3$, and that any set with $|U| \geq C q^{5/2}$ has VC-dimension at least $2$. They similarly remarked that they ``do not know to what extent the exponent $\frac{11}{4}$... and ... $\frac{5}{2}$ are sharp, but we know that neither exponent can fall below $2$." 
Our main goal in this paper is to develop a framework for these questions. Ideally, we would like to be able to prove theorems like the concrete examples given above as specific applications of a general theorem. We make partial progress towards this goal. More precisely, we will address Question \ref{qs} for $(n, d, \lambda)$-graphs, i.e. graphs with $n$ vertices, regular of degree $d$, and the second largest eigenvalue is at most $\lambda$ in absolute value. Our main results can be stated as follows. 

\begin{theorem}\label{thm1}
Let $G$ be an $(n, d, \lambda)$-graph with $d=o(n)$. Let $U$ be a vertex set. Assume that $\lambda n/d = o(|U|)$, then the VC-dimension of $\mathcal{H}(U)$ is at least $2$. 
\end{theorem}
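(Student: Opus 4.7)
The goal is to find a pair $\{x,y\}\subset U$ shattered by $\mathcal{H}(U)$, i.e., for each $(a,b)\in\{0,1\}^2$ to produce a witness $v_{ab}\in U$ with $v_{ab}\sim x\Leftrightarrow a=1$ and $v_{ab}\sim y\Leftrightarrow b=1$. My plan is to choose $\{x,y\}$ to be an edge of $G[U]$; then two of the four patterns are free, since $v_{10}=y$ and $v_{01}=x$ work automatically (using $x\sim y$ and the absence of self-loops).

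Only the $(0,0)$ and $(1,1)$ patterns remain. To handle $(0,0)$ I would first establish degree concentration in $G[U]$. Writing the adjacency matrix as $A=(d/n)J+M$ with $\|M\|\le\lambda$ and letting $\chi_U$ be the indicator vector of $U$, the identity $M\chi_U=A\chi_U-(d|U|/n)\mathbf{1}$ yields
\[\sum_{v\in V}\bigl(d_U(v)-d|U|/n\bigr)^2=\|M\chi_U\|^2\le\lambda^2|U|,\]
where $d_U(v)$ denotes the degree of $v$ in $G[U]$. Combined with the hypothesis $|U|\gg\lambda n/d$, Chebyshev's inequality shows that $d_U(v)=\Theta(d|U|/n)$ for all but $o(|U|)$ vertices of $U$. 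For such typical $x,y$ the set $(N(x)\cup N(y))\cap U\cup\{x,y\}$ has size $O(d|U|/n)=o(|U|)$ (since $d=o(n)$), leaving any vertex in its complement inside $U$ to serve as $v_{00}$.

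The crux is producing $v_{11}$, which requires the edge $\{x,y\}$ to lie in a triangle of $G[U]$. I would count triangles via $6\,T(G[U])=\mathrm{Tr}(A_U^3)$, where $A_U$ is the principal submatrix indexed by $U$; expanding $A_U=(d/n)J_U+M_U$ and applying cyclicity of trace, the main term is $d^3|U|^3/n^3$, while the error terms reduce to quantities such as $\mathbf{1}_U^\top M\mathbf{1}_U=O(\lambda|U|)$ (Expander Mixing Lemma), $\|M\chi_U\|^2\le\lambda^2|U|$, and $|\mathrm{Tr}(M_U^3)|\le|U|\lambda^3$. Showing that the main term dominates each error under only the mild hypothesis $|U|\gg\lambda n/d$ is the principal technical obstacle, and I expect it will require a case split on the relative sizes of $\lambda,d,n$. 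If triangles in $G[U]$ are unavailable in some extreme sparsity regime, my backup is a non-edge construction: take $x\not\sim y$ with a common $U$-neighbor $v_{11}$, so that $v_{00}=x$ is automatic, and let $v_{10},v_{01}$ come from the two sides of $N_U(x)\triangle N_U(y)$; non-nestedness is controlled by the spectral codegree bound $(A^2)_{uv}=d^2/n+O(\lambda^2)$, which limits the number of twin or nested pairs in $U$ via a counting argument over cherries supplied by Step~1.
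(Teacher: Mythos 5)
Your primary construction does not work under the stated hypothesis, and this is not merely a technical obstacle to be overcome by a case split. Quantitatively: writing $A_U=(d/n)J_U+M_U$, one has $\operatorname{Tr}(M_U^2)=\Theta(|U|^2d/n)$, so the cubic error term in your trace expansion is of order $\lambda\,|U|^2d/n$; for the main term $d^3|U|^3/n^3$ to dominate you need $|U|\gg \lambda n^2/d^2=(\lambda n/d)\cdot(n/d)$, which is a factor $n/d\to\infty$ stronger than the hypothesis $|U|\gg\lambda n/d$. More decisively, there exist triangle-free $(n,d,\lambda)$-graphs with $d=\Theta(n^{2/3})$ and $\lambda=\Theta(\sqrt{d})$ (Alon's construction), for which the hypothesis is satisfied already by $U=V$ yet no edge of $G[U]$ lies in a triangle. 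So the ``extreme sparsity regime'' is in fact the generic situation, and your backup (a non-edge $\{x,y\}$ with a common $U$-neighbor) must carry the entire proof. That backup is indeed the route the paper takes, but the part you leave as a sketch --- producing $v_{10}\in N_U(x)\setminus N_U(y)$ and $v_{01}\in N_U(y)\setminus N_U(x)$, i.e.\ ruling out nested neighborhoods --- is the actual content of the argument. The paper handles it by counting labeled $4$-edge paths $u_1\sim x_1\sim u_{12}\sim x_2\sim u_2$ inside a degree-regularized subset $U'$ (there are $(1+o(1))|U'|^5d^4/n^4$ of them by Proposition~\ref{paths}) and subtracting those containing a forbidden edge $u_1\sim x_2$ or $u_2\sim x_1$; each such bad path is a copy of the configuration $H_1$, whose count is $O(|U'|^5d^5/n^5+\lambda^2|U'|^3d^2/n^2+\lambda|U'|^4d^4/n^4)=o(|U'|^5d^4/n^4)$ using $d=o(n)$ and $|U|\gg\lambda n/d$. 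Your codegree heuristic $(A^2)_{uv}=d^2/n+O(\lambda^2)$ alone does not suffice, since it controls codegrees in $V$, not in $U$, and a single pair can still have $N_U(x)\subseteq N_U(y)$; some global count of the bad configurations is needed.

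A secondary issue: your ``free'' witnesses $v_{10}=y$ and $v_{01}=x$ rely on the absence of loops, which the paper deliberately does not assume (it explicitly avoids identifying any witness with an $x_i$ for exactly this reason), and which genuinely fails in the intended applications --- in the dot-product graph every unit vector carries a loop, so $h_x(x)=1$ for such $x$ and $v_{01}=x$ would realize the wrong pattern. Your degree-concentration step and the construction of $v_{00}$ are correct and match the paper's Lemma~\ref{max degree lemma}.
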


If more conditions are allowed, then one can be guaranteed a larger dimension in the next theorem. Although in the next theorem the dimension is increasing by only one from Theorem \ref{thm1}, the proof becomes much more difficult. 

\begin{theorem}\label{thm2}
Let $G$ be an $(n, d, \lambda)$-graph with $d=o(n)$. Let $U$ be a vertex set. Assume that the following properties hold.
\begin{enumerate}
    \item\label{condition} Given any three vertices $v_1, v_2, v_3 \in U$, we can find three vertices $u_1, u_2, u_3 \in U$ such that $u_i\sim v_j$ if and only if $i=j$,
    \item The size of $U$ satisfies \[|U| \geq C\max\left\lbrace \lambda^{2/3}(n/d)^2, \lambda (n/d)^{13/7}\right\rbrace.\]
\end{enumerate}
where $C$ is an absolute constant. Then the VC-dimension of $\mathcal{H}(U)$ is at least $3$. 
\end{theorem}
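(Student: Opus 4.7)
I aim to exhibit a triple $v_1, v_2, v_3 \in U$ that is shattered by $\mathcal{H}(U)$. The first hypothesis in the theorem provides the three singleton witnesses $u_1, u_2, u_3 \in U$ directly, so for the same triple it remains to find vertices of $U$ witnessing the other five subsets of $\{v_1, v_2, v_3\}$: the empty set, the three pairs, and the full triple. The strategy is to count bad triples --- triples in $U^3$ for which at least one of these five witnesses is missing --- and show that the number of bad triples is strictly less than $|U|^3$.

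For the empty-set witness, the expander mixing lemma gives $|N(v_i) \cap U| \leq d|U|/n + \lambda\sqrt{|U|}$, so $|U \setminus (N(v_1) \cup N(v_2) \cup N(v_3))| \geq |U| - 3d|U|/n - 3\lambda\sqrt{|U|}$, which is positive under our hypothesis on $|U|$. Hence an empty witness exists for \emph{every} triple. For the remaining four witnesses, define
\[X(v_1, v_2, v_3) := |N(v_1) \cap N(v_2) \cap N(v_3) \cap U|, \qquad f(v_i, v_j) := |N(v_i) \cap N(v_j) \cap U|.\]
The triple witness exists iff $X \geq 1$, and the $\{i, j\}$-pair witness exists iff $f(v_i, v_j) > X$, since a common neighbor of $v_i$ and $v_j$ in $U$ is a valid pair witness precisely when it is not adjacent to $v_k$. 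Writing $T_0 = \{(v_1, v_2, v_3) \in U^3 : X = 0\}$ and $T_{ij} = \{(v_1, v_2, v_3) \in U^3 : f(v_i, v_j) \leq X\}$, a union bound reduces the task to showing $|T_0| + |T_{12}| + |T_{13}| + |T_{23}| < |U|^3$.

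Each bad set is controlled by second-moment machinery driven by the $(n,d,\lambda)$-structure. For $|T_0|$, apply a Cauchy--Schwarz/Paley--Zygmund estimate: $|U|^3 - |T_0| \geq \Sigma_1^2 / \Sigma_2$, where $\Sigma_k := \sum_{(v_1, v_2, v_3) \in U^3} X(v_1, v_2, v_3)^k$. The first moment is $\Sigma_1 = \sum_{u \in U} d_U(u)^3$ with $d_U(u) := |N(u) \cap U|$ concentrated around $d|U|/n$ by the EML, giving $\Sigma_1 \gtrsim |U|^4(d/n)^3$. The second moment $\Sigma_2 = \sum_{u, u' \in U} f(u, u')^3$ counts ordered $K_{2,3}$-subgraphs inside $U$ and is the technical crux. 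For the $T_{ij}$, an analogous second-moment argument on $f(v_i, v_j)$ shows that most pairs satisfy $f(v_i, v_j) \gtrsim |U|(d/n)^2$, which dominates the typical $X \approx |U|(d/n)^3$ by a factor of $n/d$.

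The main obstacle is bounding $\Sigma_2$. The two terms in the hypothesis strongly hint at two complementary estimates: a purely spectral bound (for instance $\Sigma_2 \leq d \cdot \sum_{u, u'} f(u, u')^2$ combined with an EML-type bound on $\sum f^2$) yields the $\lambda^{2/3}(n/d)^2$ term, while a K\H{o}v\'ari--S\'os--Tur\'an or iterated-H\"older bound on $K_{2,3}$-counts in $U$ yields the more delicate $\lambda(n/d)^{13/7}$ term. Taking the maximum in the hypothesis corresponds to using whichever estimate is stronger in the given regime. Orchestrating both estimates so they simultaneously rule out $T_0, T_{12}, T_{13}, T_{23}$ under the stated quantitative bound on $|U|$ is the core technical challenge of the proof.
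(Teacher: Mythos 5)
Your reduction to witnesses is set up correctly and matches the paper's: the singleton witnesses come for free from hypothesis (1), the empty witness from the degree bound on $U$, and the crux is indeed finding a triple admitting a triple witness and all three pair witnesses (your criterion $f(v_i,v_j)>X$ is right). However, the global union bound over all of $U^3$ cannot succeed under the stated hypothesis, and this is a fatal gap rather than a technical one. For a triple to admit a triple witness you need $X(v_1,v_2,v_3)\ge 1$, and by Markov the number of such triples is at most $\Sigma_1=\sum_{u\in U}d_U(u)^3\approx |U|^4(d/n)^3$. Hence $|T_0|\ge |U|^3-|U|^4(d/n)^3$, so your target inequality $|T_0|+\sum_{ij}|T_{ij}|<|U|^3$ already forces $|U|\gtrsim (n/d)^3$. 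That is strictly stronger than the hypothesis in sparse regimes: for the dot-product graph in $\mathbb{F}_q^3$ one has $n/d\sim q$ and $\lambda\sim q$, so the hypothesis allows $|U|\sim q^{20/7}\ll q^3=(n/d)^3$, and then the vast majority of triples have no common neighbor in $U$ at all. The shattered triple is atypical, so any argument of the form ``most triples are good'' is doomed; one must show that among the few triples admitting a triple witness, at least one also admits the pair witnesses. The paper does exactly this by counting the whole $7$-vertex configuration $(x_1,x_2,x_3,y_{123},y_{12},y_{13},y_{23})$ (their $H_3$) in one shot --- lower-bounding it via Cauchy--Schwarz applied to the $5$-vertex subconfiguration $H_1$, then subtracting the degenerate or forbidden completions ($K_{2,3}$'s and the configurations $H_3^{\pm}$) --- and only needs that count to be positive, which is a far weaker demand than typicality.

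Two secondary points. First, even where the union bound is not vacuous, your second moment $\Sigma_2=\sum_{u,u'}f(u,u')^3$ is the labeled $K_{2,3}$ count (the paper's $H_2$), and the available estimate (Theorem \ref{count-H_2}) carries an error term $\lambda^2|U|^3d^2/n^2$ that dominates the main term $|U|^5d^6/n^6$ unless $|U|\gg\lambda(n/d)^2$ --- again stronger than the hypothesis when $|U|=\Theta(\lambda^{2/3}(n/d)^2)$. Second, your guess about the provenance of the two exponents is off: the term $\lambda(n/d)^{13/7}$ arises from controlling $H_3^+$ (a copy of $H_3$ with one extra, forbidden, edge, handled via Theorem \ref{count-c3}), not from a K\H{o}v\'ari--S\'os--Tur\'an bound on $K_{2,3}$; the term $\lambda^{2/3}(n/d)^2$ comes from the $H_2$-type degeneracies.
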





We make some remarks before proceeding. First, since $|\mathcal{H}(U)| = |U|$, the trivial upper bound gives that the VC-dimension is at most $\log_2(|U|)$, and sets with VC-dimension $2$ or $3$ are far from this. Second, when showing that the VC-dimension of a set is at least $3$, we require condition \ref{condition} in the hypotheses of Theorem \ref{thm2}. It would be very interesting to get rid of this condition or replace it with a more natural one that applies to $(n, d, \lambda)$-graphs in a more general way. Finally, in all of the previously studied graphs defined by distances or dot-products, there are geometric reasons that make finding a corresponding upper bound on the VC-dimension easy. We do not know how to give upper bounds on VC-dimension in a general way for $(n,d, \lambda)$-graphs. 



The main idea to prove Theorems \ref{thm1} and \ref{thm2} is to find a subset of $2$ or $3$ vertices respectively that are shattered by a large enough subset $U$. From the way the functions are defined, we must find sets of vertices with prescribed adjacencies and non-adjacencies; the backbone of our proof will be to show that in a suitably large subset of a pseudo-random graph, we can find and count various subgraphs. We find these results interesting in their own right, as finding or counting certain subgraphs in pseudo-random graphs has a long history of study. For example, a representative result is the following classical theorem of Alon \cite[Theorem 4.10]{KS}.

\begin{theorem}\label{alon counting}
Let $H$ be a fixed graph with $r$ edges, $s$ vertices and maximum degree $\Delta$, and let $G$ be an $(n,d,\lambda)$-graph with $d\leq 0.9n$. Then for any subset $U$ with $\lambda \left(\frac{n}{d}\right)^\Delta = o(|U|)$, the number of copies of $H$ in $U$ is 
\[
(1+o(1))\frac{|U|^s}{|Aut(H)|}\left(\frac{d}{n}\right)^r
\]
\end{theorem}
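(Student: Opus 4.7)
The plan is to count labeled embeddings of $H$ into the induced subgraph on $U$ and then divide by $|Aut(H)|$. Fix any ordering $v_1,\ldots,v_s$ of $V(H)$ and let $d_j^+ := |\{i<j : v_iv_j \in E(H)\}|$; then $d_j^+ \le \Delta$ for every $j$ and $\sum_j d_j^+ = r$. I would embed the vertices one at a time: given a valid partial embedding $v_i \mapsto x_i$ for $i \le j$, the set of valid images for $v_{j+1}$ is exactly $U \cap \bigcap_{i \le j,\, v_i \sim v_{j+1}} N_G(x_i)$, a common neighborhood in $U$ of at most $\Delta$ previously embedded vertices. The target is to show this count is $(1+o(1))|U|(d/n)^{d_{j+1}^+}$ for a dominant fraction of partial embeddings, so that a telescoping product over $j=0,\ldots,s-1$ produces $(1+o(1))|U|^s (d/n)^r$ labeled copies, from which the stated formula follows immediately.

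The technical workhorse is an iterated expander mixing lemma: for any $(n,d,\lambda)$-graph, any $W \subseteq V(G)$, and any vertices $y_1,\ldots,y_k$ with $k \le \Delta$,
\[
\Bigl||N(y_1) \cap \cdots \cap N(y_k) \cap W| - |W|(d/n)^k\Bigr| \le C_k\,\lambda\,(n/d)^{k-1},
\]
which I would prove by induction on $k$. The base case is the ordinary expander mixing lemma applied to $\{y_1\}$ and $W$, giving $\bigl||N(y_1)\cap W|-(d/n)|W|\bigr|\le \lambda\sqrt{|W|}$. The inductive step applies the mixing lemma a second time between $\{y_k\}$ and the common-neighborhood set obtained at stage $k-1$, using the inductive estimate for the size of the latter and the fact that $d/n$ is bounded away from $1$ (which is where the hypothesis $d \le 0.9n$ enters). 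Under the hypothesis $\lambda(n/d)^\Delta = o(|U|)$, the relative error at every embedding step is $o(1)$ uniformly in $k \le \Delta$.

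The main obstacle is that the Key Lemma gives an approximation for every tuple $(y_1,\ldots,y_k)$, but when substituted into the iterative embedding the errors could in principle compound multiplicatively across $s$ steps. I would control this by classifying partial embeddings at each stage $j$ into \emph{good} ones, where the common-neighborhood count is within $(1\pm \epsilon)$ of the main term, and \emph{bad} ones; the Key Lemma applied in reverse (bounding the number of tuples whose common neighborhood is atypically large) shows that bad embeddings contribute only a lower-order term which is absorbed into the $(1+o(1))$ factor. The exponent $\Delta$ in the hypothesis $\lambda(n/d)^\Delta = o(|U|)$ is sharp exactly because, regardless of the ordering chosen, some vertex must be embedded into a common neighborhood of size up to $\Delta$, and this is the critical step at which the relative error must still vanish.
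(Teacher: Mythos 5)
The paper does not actually prove this statement: it is quoted as a classical theorem of Alon, cited from the Krivelevich--Sudakov survey, so there is no in-paper proof to compare against. Your high-level plan (embed the vertices of $H$ one at a time and control the size of the candidate set at each step) is indeed the standard route, but your ``technical workhorse'' is false as stated, and this is not cosmetic. Already for $k=1$ the expander mixing lemma gives error $\lambda\sqrt{|W|}$, not $C_1\lambda$, and that is essentially tight: taking $W=N(y_1)$ in a graph with $\lambda\asymp\sqrt{d}$ gives $|N(y_1)\cap W|=d$ against a main term of $d^2/n$, an error of order $d\gg\lambda$. The best pointwise bound one can prove has the shape $k\lambda(d/n)^{k-1}\sqrt{|W|}+O_k(\lambda^2)$ (write $A=\frac{d}{n}J+E$ with $\|E\|\le\lambda$, expand the product of the $k$ neighborhood indicators, and apply Cauchy--Schwarz to the cross terms); forcing the relative error to be $o(1)$ at a step where $k=\Delta$ neighbors have already been embedded then requires $|U|\gg\lambda^2(n/d)^{\Delta}$, which is stronger than the theorem's hypothesis $|U|\gg\lambda(n/d)^{\Delta}$ by a factor of $\lambda$. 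Even taking your claimed error $C_k\lambda(n/d)^{k-1}$ at face value, the relative error at step $k$ is $C_k\lambda(n/d)^{2k-1}/|U|$, which under the stated hypothesis is only $O\bigl((n/d)^{2k-1-\Delta}\bigr)$ and hence not $o(1)$ once $k>(\Delta+1)/2$; so your own bookkeeping does not close.

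The missing idea is that the theorem cannot be obtained from any bound that holds for \emph{every} tuple of already-embedded vertices: one must average over partial embeddings. The real content of the proof is a second-moment (Chebyshev-type) estimate showing that the number of $k$-tuples whose common neighborhood in $U$ deviates from $(d/n)^k|U|$ by a factor of $\epsilon$ is a negligible fraction of all relevant tuples, together with a crude upper bound on what the exceptional tuples can contribute. Your third paragraph gestures at exactly this good/bad dichotomy, but justifies it by ``the Key Lemma applied in reverse,'' which is circular because the Key Lemma is the false pointwise statement. To repair the argument you would need to replace the workhorse by an averaged statement, e.g.\ a bound on $\sum_{\vec u}\bigl(|N(u_1)\cap\cdots\cap N(u_k)\cap U|-(d/n)^k|U|\bigr)^2$ proved by applying the mixing lemma to sets of partial embeddings rather than to singletons, and you would also need to check separately that non-injective (degenerate) maps contribute a lower-order term. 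As written, the proposal does not establish the theorem under the stated hypothesis.
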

We will state our subgraph counting theorems in the coming sections and give some discussion of them, including comparison with Theorem \ref{alon counting}, in Section \ref{configurations section}. Instead of counting subgraphs exactly, it will be more useful for us to instead count homomorphisms from a graph $H$ to the graph induced by a subset of vertices $U$ in our graph. Given a subset of vertices $U\subset V(G)$ and a fixed subgraph $H$, we will use the notation $H(U)$ to denote the number of homomorphisms from $H$ to the induced subgraph $G[U]$. For example, if $H$ is a cycle on $k$ vertices, then $C_k(U)$ denotes the number of sequences of vertices $\{v_1,\cdots, v_k\}$ such that $v_i \in U$ and $v_i\sim v_{i+1}$ for $1\leq i\leq k-1$ and $v_i\sim v_k$. Notice that this is counting labeled and possibly degenerate cycles. 
\subsection{Applications}
We now present some applications of Theorems \ref{thm1} and \ref{thm2}. There are many $(n, d, \lambda)$-graphs that can be computed explicitly in the literature, such as Cayley graphs. In this section, we only emphasize some particular ones, viz., dot-product graphs and distance graphs. 

Let $G=(V, E)$ be the dot-product graph in $\mathbb{F}_q^t$ defined by $V=\mathbb{F}_q^t\setminus (0, \ldots, 0)$, and there is an edge between two vertices $(x_1, x_2, \ldots, x_t)$ and $(y_1, y_2, \ldots, y_t)$ if $x_1y_1+\cdots+x_ty_t=1$. It is well-known in the literature that this graph is an $(n,{d}, \lambda)$-graph with $n=q^t-1$, $d\sim q^{t-1}$ and $\lambda \leq 2q^{\frac{t-1}{2}}$, see \cite[Theorem 8.1]{vinh2014sovability} for example. 

The following is the direct application of Theorems \ref{thm1} and \ref{thm2}. 

\begin{theorem}\label{app1}
    Let $U\subset \mathbb{F}_q^t$ be a subset of vertices in the dot-product graph. There is a constant $C$ such that
\begin{enumerate}
    \item If $|U|\geq C q^{\frac{t+1}{2}}$, then the dimension of  $\mathcal{H}(U)$ is at least $2$. 
    \item If $|U|\geq C \max\{q^{\frac{7t+19}{14}}, q^{t-1}\}$, then the dimension of  $\mathcal{H}(U)$ is at least $3$.
\end{enumerate}
\end{theorem}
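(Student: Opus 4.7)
The plan is to view Theorem \ref{app1} as a direct corollary of Theorems \ref{thm1} and \ref{thm2} applied to the dot-product graph, whose pseudo-random parameters were recorded just above the statement: $n = q^t - 1$, $d = (1+o(1)) q^{t-1}$, and $\lambda \leq 2 q^{(t-1)/2}$, so $n/d \sim q$ and $\lambda n/d = O(q^{(t+1)/2})$. Part (1) is then an immediate substitution into Theorem \ref{thm1}: the hypothesis $\lambda n/d = o(|U|)$ holds as soon as $|U| \geq C q^{(t+1)/2}$ for a sufficiently large absolute constant $C$, so Theorem \ref{thm1} yields VC-dimension at least $2$.

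For part (2), I would first compute the two terms appearing in the quantitative hypothesis of Theorem \ref{thm2}:
\[
\lambda^{2/3}(n/d)^2 = O\bigl(q^{(t+5)/3}\bigr), \qquad \lambda(n/d)^{13/7} = O\bigl(q^{(7t+19)/14}\bigr).
\]
A short comparison of exponents shows that $(7t+19)/14 \geq (t+5)/3$ whenever $7t \geq 13$, i.e.\ for every $t \geq 2$, so the second term dominates and the size hypothesis of Theorem \ref{thm2} reduces to $|U| \geq C q^{(7t+19)/14}$.

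The main obstacle, and the reason for the auxiliary $q^{t-1}$ factor in the maximum, is verifying hypothesis \ref{condition} of Theorem \ref{thm2}: for \emph{every} triple $v_1, v_2, v_3 \in U$ one must exhibit $u_1, u_2, u_3 \in U$ with $u_i \sim v_j$ if and only if $i = j$. In the dot-product graph each neighborhood $N(v) = \{u \in \mathbb{F}_q^t : u \cdot v = 1\}$ is an affine hyperplane of size $q^{t-1}$, and for $v_1, v_2, v_3$ in general position, inclusion--exclusion gives
\[
\Bigl|N(v_i) \setminus \bigcup_{j \neq i} N(v_j)\Bigr| = q^{t-1} - 2q^{t-2} + q^{t-3}.
\]
I would then show that $U$ meets each of these three ``target'' sets: bound the pairwise intersections trivially by $|N(v_i) \cap N(v_j)| \leq q^{t-2}$, and lower bound $|U \cap N(v_i)|$ using the expander mixing lemma (or, equivalently, a standard character-sum estimate) applied to the dot-product graph. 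This is precisely where the threshold $|U| \geq C q^{t-1}$ enters, with the few degenerate configurations (when the $v_i$'s span a proper subspace of $\mathbb{F}_q^t$) handled separately by an elementary linear-algebraic argument. Taking the maximum of this threshold with the one obtained in the previous paragraph yields $|U| \geq C \max\{q^{(7t+19)/14},\, q^{t-1}\}$, at which point Theorem \ref{thm2} delivers VC-dimension at least $3$.
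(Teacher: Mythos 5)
Your overall strategy is the paper's: part (1) is indeed immediate from Theorem \ref{thm1} with $n/d\sim q$ and $\lambda\le 2q^{(t-1)/2}$, and your exponent arithmetic for part (2) is correct ($\lambda^{2/3}(n/d)^2=O(q^{(t+5)/3})$ is dominated by $\lambda(n/d)^{13/7}=O(q^{(7t+19)/14})$ for $t\ge 2$). You also correctly identify that the real work is verifying condition \ref{condition} of Theorem \ref{thm2} via the fact that two distinct hyperplanes meet in at most $q^{t-2}$ points.

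The gap is in how you lower bound $|U\cap N(v_i)|$. You propose to do this ``using the expander mixing lemma (or a character-sum estimate) applied to the dot-product graph,'' claiming this is where $|U|\ge Cq^{t-1}$ enters. But the expander mixing lemma applied to the pair $(\{v_i\},U)$ gives
\[
\Bigl| |N(v_i)\cap U| - \tfrac{d}{n}|U| \Bigr| \le \lambda\sqrt{|U|},
\]
and at the threshold $|U|=Cq^{t-1}$ the main term is $\frac{d}{n}|U|\sim Cq^{t-2}$ while the error term is $\lambda\sqrt{|U|}\sim 2\sqrt{C}\,q^{t-1}$, i.e.\ larger by a factor of $q$; the estimate is vacuous unless $|U|\gg \lambda^2(n/d)^2\sim q^{t+1}$, which would ruin the claimed exponent. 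Worse, no estimate of this kind can hold for \emph{every} $v_i\in U$: a set $U$ of size $Cq^{t-1}$ may well contain vertices with no neighbors in $U$ at all. The paper's fix is Lemma \ref{max degree lemma}, which controls degrees into $U$ only for a $(1-o(1))$-fraction of vertices (it is proved by applying the mixing lemma to the \emph{set} of atypical vertices, not to a single vertex); one then verifies condition \ref{condition} only for triples $v_1,v_2,v_3$ in the resulting subset $U'$, which is all that the proof of Theorem \ref{thm2} actually uses, since the shattered triple $x_1,x_2,x_3$ is produced inside $U'$. With that substitution --- $|N(v_i)\cap U|\ge \frac{|U|d}{2n}\sim \frac{C}{2}q^{t-2}>2q^{t-2}$ for $v_i\in U'$ and $C$ large --- your inclusion--exclusion argument goes through and the proof is complete.
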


In three dimensions, if the geometric properties are taken into consideration, then we have a better result, which improves earlier results in \cite{IMS}. 
\begin{theorem}\label{app0}
  Let $U\subset \mathbb{F}_q^3$ be a subset of vertices in the dot-product graph. There is a constant $C$ such that
\begin{enumerate}
    \item If $|U|\geq C q^{2}$, then the dimension of  $\mathcal{H}(U)$ is at least $2$. 
    \item If $|U|\geq C q^{5/2}$, then the dimension of  $\mathcal{H}(U)$ is equal to $3$.
    \end{enumerate}
\end{theorem}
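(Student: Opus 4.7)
The plan is to prove \cref{app0} in three pieces: the lower bound VC-dim $\geq 2$ in part (1), and the lower bound VC-dim $\geq 3$ together with the upper bound VC-dim $\leq 3$ that comprise part (2). Following \cite{vinh2014sovability}, the dot-product graph on $\mathbb{F}_q^3$ is an $(n,d,\lambda)$-graph with $n = q^3-1$, $d \sim q^2$, and $\lambda \leq 2q$, so $\lambda n/d \sim q^2$; part (1) is then immediate from \cref{thm1}.

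For the upper bound VC-dim $\leq 3$, I would use the fact that the neighborhood $N(v) = \{x \in \mathbb{F}_q^3 : v \cdot x = 1\}$ is an affine hyperplane not through the origin, and show that no four points in $\mathbb{F}_q^3$ can be shattered by the family of such hyperplanes. If $p_1, p_2, p_3, p_4$ realize the pattern $S = \{1,2,3,4\}$ then they lie on a common hyperplane $H_v$; assuming $p_1, p_2, p_3$ are linearly independent, the system $v' \cdot p_i = 1$ ($i=1,2,3$) uniquely determines $v' = v$, and $v' \cdot p_4 = 1$ then blocks the pattern $S = \{1,2,3\}$. A short case analysis dispatches the remaining linearly dependent configurations.

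For the lower bound VC-dim $\geq 3$, applying \cref{thm2} generically gives only $|U| \geq Cq^{20/7}$ (from $\lambda(n/d)^{13/7} \sim q^{20/7}$), which is weaker than $q^{5/2}$. To reach the sharper threshold I would replace the generic subgraph count used in the proof of \cref{thm2} with one tailored to the dot-product structure. The central quantity is the number of 4-tuples $(p_1, p_2, p_3, q) \in U^4$ with $p_i \cdot q = 1$ for all $i$, which equals $\sum_{q \in U} |N(q) \cap U|^3$; Jensen's inequality applied to the induced-degree sequence, together with the expander mixing estimate $|N(q) \cap U| = |U|/q + O(q)$ in an average sense, shows this sum is at least $(1+o(1))|U|^4/q^3$ as long as $|U| \gg q^2$. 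Each such 4-tuple with $p_1, p_2, p_3$ linearly independent realizes the most restrictive pattern $S = \{1,2,3\}$ for the triple $(p_1, p_2, p_3)$; the remaining seven patterns live in substantially larger regions—hyperplanes minus lines, lines minus points, and the three-way complement—whose expected $U$-populations are $\sim q^{3/2}$, $\sim q^{1/2}$, and $\sim |U|$ respectively.

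The main obstacle is simultaneity: among the $\gtrsim |U|^4/q^3$ triples that realize $S = \{1,2,3\}$, we must exhibit a single triple realizing all seven remaining patterns. The delicate case is when $|N(p_i) \cap N(p_j) \cap U| = 1$, so that its unique element is forced to be $q^*_{\{1,2,3\}}$ and pattern $S = \{i,j\}$ fails. I would control this bad event by a second-moment bound on the line-incidence counts $|N(p_i) \cap N(p_j) \cap U|$, which should decay rapidly enough under $|U| \geq Cq^{5/2}$ to leave at least one shattered triple.
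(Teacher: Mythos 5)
Part (1) and the upper bound in part (2) are fine and essentially coincide with the paper's argument: part (1) is the direct specialization of \cref{thm1} with $\lambda n/d \sim q^2$, and your uniqueness-of-$v'$ argument for four points is the same plane/affine-dependence observation the paper uses (your deferred ``short case analysis'' for degenerate quadruples is the only loose end there, and it is genuinely needed since three points of $\mathbb{F}_q^3$ need not be linearly independent).

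The lower bound VC-dim $\geq 3$ is where you depart from the paper, and where your proposal has a real gap. The paper does not abandon the machinery of \cref{thm2}; it reruns that proof and uses three geometric facts to cheapen each of the three conditions: two planes meet in at most $q$ points, so \cref{thm:gamma} with $\gamma = q$ improves the $H_2$ bound to threshold $q^{5/2}$; every copy of $H_3^-$ contains a unit vector, so deleting the $O(q^2)$ unit vectors from $U$ kills that configuration; and $H_3^+$ cannot occur because three planes meet in a line or not at all. Your direct route --- count the $\gtrsim |U|^4/q^3$ common-neighbor $4$-tuples and then argue that some triple simultaneously realizes the other seven patterns --- is plausible in outline, but the ``simultaneity'' step you defer is precisely the content of the theorem, and as sketched it misses a case your second-moment bound cannot see: the three planes $N(p_1),N(p_2),N(p_3)$ may share a common \emph{line}. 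For such a coaxial triple, every common neighbor of $p_i$ and $p_j$ is automatically a common neighbor of $p_k$, so the pattern $\{i,j\}$ is unrealizable no matter how richly $U$ meets that line; controlling $|\ell \cap U|$ does nothing here, and you must separately show that coaxial triples carry a negligible share of the common-neighbor count (this is morally what the paper's $H_3^+$ exclusion and unit-vector removal accomplish). Even in the non-degenerate case, the bound you invoke is not derived: you need to show that the $4$-tuples supported on triples with some $|N(p_i)\cap N(p_j)\cap U| = 1$ number $o(|U|^4/q^3)$, and it is exactly in that weighted second-moment computation that the exponent $5/2$ has to appear; as written the proposal asserts rather than proves it.
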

This theorem recovers the exponent $3/2$ in dimension $2$ from the paper \cite{IMS} and improves the exponent $11/4$ in dimension 3 from \cite{IMS} down to $5/2$. 

We now move to the graph defined by the distance function. Let $G=(V, E)$ be the distance graph in $\mathbb{F}_q^t$ defined by $V=\mathbb{F}_q^t$, and there is an edge between two vertices $(x_1, x_2, \ldots, x_t)$ and $(y_1, y_2, \ldots, y_t)$ if $(x_1-y_1)^2+\cdots+(x_t-y_t)^2=1$. It is well-known in the literature that this graph is an $(n,{d},\lambda)$-graph with $n=q^t$, $d \sim q^{t-1}$, $\lambda\leq  2q^{\frac{t-1}{2}}$-graph, see \cite[Sections 2--6]{bannai2004finite} and \cite[Section 3]{kwok1992character} for example.
As above, we also have the following application in this setting.
\begin{theorem}\label{app2}
    Let $U\subset \mathbb{F}_q^t$ be a subset of vertices in the distance graph. There is a constant $C$ such that
\begin{enumerate}
    \item If $|U|\geq C q^{\frac{t+1}{2}}$, then the dimension of  $\mathcal{H}(U)$ is at least $2$. 
    \item If $|U|\geq C \max\{q^{\frac{7t+19}{14}}, q^{t-1}\}$, then the dimension of  $\mathcal{H}(U)$ is at least $3$.
\end{enumerate}
\end{theorem}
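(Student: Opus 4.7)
The plan is to apply Theorems \ref{thm1} and \ref{thm2} to the distance graph using the $(n,d,\lambda)$-parameters cited just before the statement: $n = q^t$, $d \sim q^{t-1}$, and $\lambda \leq 2q^{(t-1)/2}$. With these substitutions, the two basic quantities appearing in the hypotheses become $\lambda n/d \sim q^{(t+1)/2}$ and $n/d \sim q$.

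For part (1), this is immediate: the hypothesis $\lambda n/d = o(|U|)$ of Theorem \ref{thm1} is satisfied as soon as $|U| \geq Cq^{(t+1)/2}$ for a sufficiently large constant $C$, giving VC-dimension at least $2$. For part (2), I would apply Theorem \ref{thm2}. Its size hypothesis becomes $|U| \geq C\max\{q^{(t+5)/3}, q^{(7t+19)/14}\}$, and a short algebra check shows $(7t+19)/14 \geq (t+5)/3$ for all $t \geq 2$, so this reduces to $|U| \geq Cq^{(7t+19)/14}$. It then remains to verify condition \ref{condition}: given any $v_1, v_2, v_3 \in U$, produce $u_1, u_2, u_3 \in U$ with $u_i \sim v_j$ iff $i = j$. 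For each $i$ I would count the points $u \in U$ adjacent to $v_i$ but not to $v_j$ or $v_k$ (where $\{i,j,k\} = \{1,2,3\}$). Using the pseudo-randomness of the graph together with the geometric fact that the intersection of two or three distinct unit spheres in $\mathbb{F}_q^t$ is a variety with roughly $q^{t-2}$ or $q^{t-3}$ points, the main term of this count is $d|U|/n \sim |U|/q$, while the ``bad'' contributions coming from $N(v_j) \cup N(v_k)$ are of order $|U|/q^2$. The main term will dominate once $|U|$ is a sufficiently large multiple of $q^{t-1}$; this is precisely where the extra $q^{t-1}$ summand in the hypothesis originates.

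The main obstacle will be making the verification of condition \ref{condition} uniform over all triples at the threshold $|U| \gtrsim q^{t-1}$. A direct application of the Expander Mixing Lemma with one endpoint equal to a single vertex produces an error term $\lambda\sqrt{|U|}$ that overwhelms the main term $|U|/q$ in the dense regime $|U| \lesssim q^t$, so one cannot invoke a generic $(n,d,\lambda)$-graph estimate as a black box. I would instead exploit the explicit Fourier / Gauss-sum description of unit spheres in $\mathbb{F}_q^t$ to derive a pseudo-random estimate for $|(U \cap N(v_i)) \setminus (N(v_j) \cup N(v_k))|$ that holds uniformly in $v_1,v_2,v_3$. This geometric input—unavailable for a generic $(n,d,\lambda)$-graph, and the reason Theorem \ref{app2} is not a pure black-box consequence of Theorem \ref{thm2}—is what makes the $q^{t-1}$ threshold attainable and completes the verification of condition \ref{condition}.
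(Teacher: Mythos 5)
Your overall structure matches the paper's: part (1) is a direct application of Theorem \ref{thm1} with $\lambda n/d \sim q^{(t+1)/2}$, and for part (2) your reduction of the size hypothesis of Theorem \ref{thm2} to $|U|\geq Cq^{(7t+19)/14}$ is correct, so everything hinges on verifying condition \ref{condition}. You have correctly diagnosed that a single-vertex expander mixing estimate is too weak, but your proposed repair has a genuine gap. No Fourier or Gauss-sum argument can give $|N(v)\cap U| \approx |U|/q$ \emph{uniformly over all} $v\in U$ at density $|U|\sim q^{t-1}$: the sphere is already an essentially optimal Salem set, so the Fourier route reproduces the same error term $\lambda\sqrt{|U|}\sim q^{(t-1)/2}|U|^{1/2}$ that you identified as fatal. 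Worse, the uniform statement is simply false: take $U$ to consist of a point $v$ together with $Cq^{t-1}$ points off the unit sphere centered at $v$; then $v$ has no neighbors in $U$ and condition \ref{condition} fails for every triple containing $v$. So the step you flag as the ``main obstacle'' cannot be overcome in the way you propose.

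The paper's resolution is different and more elementary. First, the condition only needs to hold for triples in the subset $U'$ produced by Lemma \ref{max degree lemma}, since that is where the configuration $x_1,x_2,x_3,\dots$ is located in the proof of Theorem \ref{thm2}; for $v_i\in U'$ one has $|N(v_i)\cap U| = \Theta(|U|d/n) = \Theta(|U|/q)$ by construction, and no estimate uniform over all of $\mathbb{F}_q^t$ is required. Second, the ``bad'' contribution is controlled deterministically rather than pseudo-randomly: two distinct unit spheres in $\mathbb{F}_q^t$ intersect in a quadric contained in a hyperplane, hence in $O(q^{t-2})$ points, so $|N(v_i)\cap N(v_j)| = O(q^{t-2})$ regardless of $U$ --- there is no need to show the intersection meets $U$ in only $O(|U|/q^2)$ points. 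The count of admissible $u_i$ is then at least $c|U|/q - O(q^{t-2})$, which is positive precisely when $|U|\geq Cq^{t-1}$; this, not a refined exponential-sum estimate, is where the $q^{t-1}$ term in the hypothesis comes from. This mirrors the hyperplane-intersection argument the paper gives explicitly for the dot-product graph in Theorem \ref{app1}.
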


Compared to the result in \cite{VC2} with $t=2$, we recover their result to find VC-dimension $2$ but we can see that we get a trivial result for VC-dimension $3$ whereas in \cite{VC2} the exponent $15/8$ was given. This comes from the fact that in some specific settings, counting subgraph configurations in the next sections is unnecessary, and in the plane $\mathbb{F}_q^2$, the distance graph possesses some nice geometric properties, which implies a simpler proof with better exponents. Unlike the dot-product graph, we do not know how to improve the exponent $15/8$, which is left as an open question.



\subsection{Open questions}
We do not believe that Theorem \ref{thm2} is sharp in general. In particular, one of our estimates on subconfigurations (Theorem \ref{count-H_2} below) involves a recursive step using Cauchy-Schwarz that could probably be improved in many cases, but this was the best estimate we found. In Section \ref{configurations section} we point out other configurations for which we believe it would be interesting to tighten our estimates.


Our theorems give conditions to guarantee that the VC-dimension of an $(n,d,\lambda)$-graph is at least 2 or 3. In \cite{ABC}, for each fixed $D$ the authors give the threshold function for a random graph to have VC-dimension at least $D$. It would be interesting to do this in the setting of $(n, d,\lambda)$-graphs: that is, to give conditions on $d$ and $\lambda$ which would guarantee that the graph has VC-dimension at least $D$. New ideas would be needed to do this. Already to show a lower bound of VC-dimension $4$ using the techniques in this paper is outside of our capabilities.

Finally, in Theorem \ref{app0}, we do not have a construction showing that the exponent $5/2$ is best possible, but we believe that it could be, and it would be interesting to determine if this is correct.


\subsection{Structure}
The paper is organized as follows. In Section \ref{tools section} we collect preliminary lemmas that we will require during the proofs. In Section \ref{configurations section} we show that we can count or bound $H(U)$ for various graphs $H$ as long as $U$ is large enough. In Sections \ref{theorem 1 proof section} and \ref{theorem 2 proof section}, we show how to use these counting results to prove Theorems \ref{thm1} and \ref{thm2}. Finally, in Section \ref{application section}, we show our applications to the distance and dot-product graphs. Given two functions $f,g: \mathbb{N} \to \mathbb{N}$ we will use the notation $f\ll g$ to mean that $f = O(g)$ and $f\gg g$ to mean that $f = \Omega(g)$. 
\section{Tools}\label{tools section}
In this section we list the tools that we will need in our proofs. We will extensively use the following weighted version of the expander mixing lemma. As a historical note, the (unweighted) expander mixing lemma was proved at least as early as 1980 by Haemers in his PhD thesis (\cite{haemers} Theorem 3.1.1) in the language of design theory.

\begin{lemma}\label{th:expanderMixing}
Let $G = (V,E)$ be an $(n, d, \lambda)$-graph, and $A$ be its adjacency matrix. For real $f, g\in L^2(V)$, we have 
\[\left|\langle
f,Ag\rangle-dn\mathbb{E}(f)\mathbb{E}(g)\right|\leq \lambda\|f\|_2\|g\|_2,\]
where
\[\mathbb{E}(f):=\frac{1}{n}\sum_{v\in V}f(v), \text{ and } ||f||_2^2=\sum_{v\in V}|f(v)|^2.\]
\end{lemma}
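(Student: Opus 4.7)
The plan is to reduce the claim to the standard fact that the operator norm of $A$ restricted to the orthogonal complement of the all-ones vector $\mathbf{1}$ is at most $\lambda$. Since $G$ is $d$-regular, $\mathbf{1}$ is an eigenvector of $A$ with eigenvalue $d$, and since $A$ is symmetric, the subspace $\mathbf{1}^\perp$ is $A$-invariant with all remaining eigenvalues bounded in absolute value by $\lambda$.

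First I would decompose $f$ and $g$ along the span of $\mathbf{1}$ and along $\mathbf{1}^\perp$: write $f = \mathbb{E}(f)\mathbf{1} + f^\perp$ and $g = \mathbb{E}(g)\mathbf{1} + g^\perp$, with $\langle f^\perp, \mathbf{1}\rangle = \langle g^\perp, \mathbf{1}\rangle = 0$. Expanding the bilinear form and using $A\mathbf{1} = d\mathbf{1}$ together with the fact that $A$ sends $\mathbf{1}^\perp$ into itself, the two cross terms vanish and one obtains
\[\langle f, Ag\rangle = \mathbb{E}(f)\mathbb{E}(g)\langle \mathbf{1}, A\mathbf{1}\rangle + \langle f^\perp, A g^\perp\rangle = dn\,\mathbb{E}(f)\mathbb{E}(g) + \langle f^\perp, A g^\perp\rangle,\]
which identifies the error term as exactly $\langle f^\perp, A g^\perp\rangle$.

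Second, I bound this error term. Because $A$ is symmetric and restricts to $\mathbf{1}^\perp$ with spectral radius at most $\lambda$, one has $\|A g^\perp\|_2 \leq \lambda \|g^\perp\|_2$. The Cauchy--Schwarz inequality combined with the Pythagorean bounds $\|f^\perp\|_2 \leq \|f\|_2$ and $\|g^\perp\|_2 \leq \|g\|_2$ then yields
\[|\langle f^\perp, A g^\perp\rangle| \leq \|f^\perp\|_2 \|A g^\perp\|_2 \leq \lambda \|f^\perp\|_2 \|g^\perp\|_2 \leq \lambda \|f\|_2 \|g\|_2,\]
which is exactly the claimed inequality.

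This is essentially a direct spectral computation, so I do not anticipate a genuine obstacle. The one point requiring care is the vanishing of the cross terms $\langle \mathbb{E}(f)\mathbf{1}, A g^\perp\rangle$ and $\langle f^\perp, A\mathbb{E}(g)\mathbf{1}\rangle$; this follows from symmetry of $A$ via $\langle \mathbf{1}, A g^\perp\rangle = \langle A\mathbf{1}, g^\perp\rangle = d \langle \mathbf{1}, g^\perp\rangle = 0$, and similarly for the other term. Once this is verified, the lemma is immediate.
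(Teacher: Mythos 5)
Your argument is correct and complete: it is the standard spectral proof of the (weighted) expander mixing lemma, decomposing $f$ and $g$ along $\mathbf{1}$ and its orthogonal complement, using $A\mathbf{1}=d\mathbf{1}$ and symmetry to kill the cross terms, and bounding the remaining term by $\lambda\|f^\perp\|_2\|g^\perp\|_2\le\lambda\|f\|_2\|g\|_2$. The paper states this lemma as a known tool without proof (attributing the unweighted version to Haemers), so there is no in-paper argument to compare against, but your write-up is exactly the canonical derivation and has no gaps.
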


At times, the classical expander mixing lemma will not be precise enough for our purposes. We will use the following lemma, first proved in \cite{PSTT}, that is specific to tensor powers of graphs. Notice that the function $f$ below has a different domain than $f$ above.

\begin{proposition}
\label{keylemma}
Let $G$ be an $(n, d, \lambda)$-graph. For two {non-negative} functions $f, g\colon V\times V\to \mathbb{R}$, we define $F(x)=\sum_{y}f(x, y)$, $G(z)=\sum_{w}g(z, w)$, $F'(y)=\sum_{x}f(x, y)$, and $G'(w)=\sum_{z}g(z, w)$. Then we have 
\[\left\vert \sum_{x\sim z, y\sim w}f(x, y)g(z, w)-\frac{d^2}{n^2}||f||_1||g||_1 \right\vert\le \lambda^2||f||_2||g||_2+\frac{d\lambda}{n}\left(||F||_2||G||_2+||F'||_2||G'||_2\right).\]
\end{proposition}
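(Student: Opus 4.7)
The plan is to recognize the target sum as a bilinear form on the tensor square of the adjacency matrix, and then to spectrally decompose $A \otimes A$ in a way that isolates the main term cleanly. Explicitly, viewing $f$ and $g$ as vectors in $\mathbb{R}^{V \times V}$ and letting $A$ be the adjacency matrix of $G$, we have
\[\sum_{x \sim z,\, y \sim w} f(x, y)\, g(z, w) \;=\; \langle f,\, (A \otimes A)\, g\rangle.\]
The goal is then to expand $A \otimes A$ so the three error terms on the right-hand side of the inequality fall out naturally.

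The first step will be to split $A = \tfrac{d}{n} J + E$, where $J$ is the $n \times n$ all-ones matrix and $E := A - \tfrac{d}{n} J$. Since $A\mathbb{1} = d\mathbb{1}$ we get $E\mathbb{1} = 0$, and on $\mathbb{1}^{\perp}$ the matrix $E$ agrees with $A$, so $\|E\|_{\mathrm{op}} \le \lambda$ on all of $\mathbb{R}^V$. Tensoring gives
\[A \otimes A \;=\; \tfrac{d^2}{n^2}\, J \otimes J \;+\; \tfrac{d}{n}(J \otimes E) \;+\; \tfrac{d}{n}(E \otimes J) \;+\; E \otimes E.\]
From here I would treat the four pieces separately. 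The rank-one piece contributes $\tfrac{d^2}{n^2}\langle f, (J \otimes J) g\rangle = \tfrac{d^2}{n^2}\|f\|_1\|g\|_1$, which is precisely the main term being subtracted off. For the mixed piece $J \otimes E$, a short direct calculation gives $((J \otimes E) g)(x, y) = (EG')(y)$, independent of $x$, so $\langle f, (J \otimes E) g\rangle = \langle F', E G'\rangle$, which is at most $\lambda \|F'\|_2 \|G'\|_2$ by Cauchy-Schwarz together with $\|E\|_{\mathrm{op}} \le \lambda$. Symmetrically, $\langle f, (E \otimes J) g\rangle = \langle F, EG\rangle$ is at most $\lambda \|F\|_2 \|G\|_2$. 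Finally, $\|E \otimes E\|_{\mathrm{op}} = \|E\|_{\mathrm{op}}^2 \le \lambda^2$ gives $|\langle f, (E \otimes E) g\rangle| \le \lambda^2 \|f\|_2 \|g\|_2$. Combining the three error bounds by the triangle inequality yields the claimed estimate.

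I do not expect a serious obstacle here: once the decomposition $A = \tfrac{d}{n} J + E$ is set up so that $E$ has operator norm at most $\lambda$ on the full space, the rest is tensor-product bookkeeping. The one point to be careful about is that $f$ and $g$ are not assumed orthogonal to the constants, so one cannot simply invoke a $\lambda$-type spectral bound on $\langle f, (A \otimes A) g\rangle$ directly; peeling off the rank-one piece $J \otimes J$ is exactly what extracts the main term and also what produces the two intermediate $\tfrac{d\lambda}{n}$-size errors through the partial marginals $F, G, F', G'$.
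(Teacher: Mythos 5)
Your argument is correct, and it is essentially the standard proof of this estimate: the paper itself does not prove Proposition \ref{keylemma} but cites it from \cite{PSTT}, where the same idea --- writing $A=\frac{d}{n}J+E$ with $\|E\|_{\mathrm{op}}\le\lambda$, tensoring, and bounding the four resulting terms --- underlies the argument. All the individual steps check out: the identification of the sum with $\langle f,(A\otimes A)g\rangle$, the computation $\langle f,(J\otimes E)g\rangle=\langle F',EG'\rangle$ and its symmetric counterpart, and the bound $\|E\otimes E\|_{\mathrm{op}}\le\lambda^2$, so nothing further is needed.
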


We will also need an extension of the expander mixing lemma to counting paths and cycles. The following can be proved using Lemma \ref{th:expanderMixing} or Proposition \ref{keylemma} and induction, see \cite{PSTT}.

\begin{proposition}[Proposition 3.5 in \cite{PSTT}]\label{paths}
Let $G$ be an $(n, d, \lambda)$-graph, $k\ge 1$ an integer,  and $U$ be a vertex set with $\lambda \cdot \frac{n}{d} = o(|U|)$. Let $P_k(U)$ denote the number of (labeled, possibly degenerate) paths with $k$ edges in $U$. Then we have 
\[P_k(U)=\left(1 + o(1) \right)\frac{|U|^{k+1}d^k}{n^k}.\]
\end{proposition}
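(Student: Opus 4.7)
My plan is to induct on $k$, using the expander mixing lemma (Lemma~\ref{th:expanderMixing}) as the main engine. For the base case $k=1$, applying the lemma with $f=g=\mathbf{1}_U$ gives
\[
P_1(U) = \langle \mathbf{1}_U, A\mathbf{1}_U\rangle = \frac{d|U|^2}{n} + O(\lambda|U|),
\]
and the hypothesis $\lambda n/d = o(|U|)$ absorbs the error into the $(1+o(1))$ factor.

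For the inductive step, I introduce the endpoint-weight function $x_{k-1}\colon V(G)\to\mathbb{Z}_{\ge 0}$, supported on $U$, where $x_{k-1}(v)$ counts length-$(k-1)$ walks in $U$ ending at $v$. Then $P_k(U) = \langle \mathbf{1}_U, Ax_{k-1}\rangle$, and Lemma~\ref{th:expanderMixing} with $f=\mathbf{1}_U$, $g=x_{k-1}$ gives
\[
P_k(U) = \frac{d|U|}{n}\, P_{k-1}(U) + O\bigl(\lambda\sqrt{|U|}\,\|x_{k-1}\|_2\bigr).
\]
The main term is $(1+o(1))|U|^{k+1}d^k/n^k$ by the inductive hypothesis. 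The key structural identity is that $\|x_{k-1}\|_2^2$ counts pairs of length-$(k-1)$ walks in $U$ sharing their endpoint; concatenating one with the reverse of the other yields $\|x_{k-1}\|_2^2 = P_{2k-2}(U)$. Granted an a priori upper bound $P_{2k-2}(U) \ll |U|^{2k-1} d^{2k-2}/n^{2k-2}$, the error reduces to $O(\lambda |U|^k d^{k-1}/n^{k-1})$, which is precisely $o(|U|^{k+1}d^k/n^k)$ under the hypothesis.

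The main obstacle is that naive induction on $k$ alone cannot supply this upper bound on $P_{2k-2}(U)$ when $k\ge 3$, since $2k-2>k$. To break the circularity, I would first establish the one-sided upper bound $P_j(U) \le (1+o(1))|U|^{j+1}d^j/n^j$ uniformly in $j$ by a Rayleigh-quotient argument. Writing $B := D_U A D_U$ where $D_U$ is the diagonal indicator of $U$, one has $P_j(U) = \langle \mathbf{1}_U, B^j\mathbf{1}_U\rangle \le \|B\|_{\mathrm{op}}^j\,|U|$ by Cauchy--Schwarz and symmetry of $B$. For any unit vector $v$ supported on $U$, decomposing $v = \alpha\,\mathbf{1}/\sqrt{n} + v^\perp$ with $|\alpha|\le\sqrt{|U|/n}$ and $v^\perp\perp\mathbf{1}$ gives
\[
\langle v, Av\rangle = \alpha^2 d + \langle v^\perp, Av^\perp\rangle \le \frac{d|U|}{n} + \lambda,
\]
so $\|B\|_{\mathrm{op}} \le d|U|/n + \lambda = (1+o(1))d|U|/n$. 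This delivers the desired upper bound on $P_j(U)$ for each fixed $j$, and feeding it back into the expander mixing recursion closes the induction.
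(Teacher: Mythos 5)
Your proof is correct, and it follows exactly the route the paper indicates for this imported result (Proposition 3.5 of \cite{PSTT} is stated here without proof, with the remark that it follows from Lemma \ref{th:expanderMixing} and induction): the mixing-lemma recursion $P_k(U)=\frac{d|U|}{n}P_{k-1}(U)+O\bigl(\lambda\sqrt{|U|\,P_{2k-2}(U)}\bigr)$ is the standard argument. Your diagnosis of the circularity in bounding $P_{2k-2}(U)$, and its resolution via the a priori bound $\|D_UAD_U\|_{\mathrm{op}}\le d|U|/n+\lambda$ (the top eigenvalue of the induced subgraph), is the right and standard patch, so nothing is missing.
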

\begin{theorem}[Theorem 1.5 in \cite{PSTT}]\label{cycle-main1}
Let $G$ be an $(n, d, \lambda)$-graph and $U$ be a vertex set with ${\lambda \cdot \frac{n}{d}=o(|U|)}$. Let $C_m(U)$ denote the number of (labeled, possibly degenerate) cycles of length $m$ with vertices in $U$. Then we have 
\[
\left|C_m(U) -\frac{|U|^md^m}{n^m}\right| = O \left( \frac{\lambda |U|^{m-1}d^{m-1}}{n^{m-1}} + \frac{\lambda^{m-2} |U|^2 d}{n}\right),
\]
\end{theorem}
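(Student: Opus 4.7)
The plan is a spectral approach: express the cycle count as a matrix trace and decompose the adjacency matrix into a rank-one main piece plus a small-norm error. First, observe that
\[
C_m(U) = \operatorname{Tr}\bigl((D_U A D_U)^m\bigr),
\]
where $D_U$ is the diagonal matrix with $\mathbf{1}_U$ on the diagonal, since $(D_U A D_U)^m(v,v)$ counts closed walks of length $m$ starting at $v$ whose every vertex lies in $U$. Decompose $A = \tfrac{d}{n} J + E$, where $J$ is the all-ones matrix. Because $\mathbf{1}/\sqrt{n}$ is a common eigenvector of $A$ and $\tfrac{d}{n}J$ with eigenvalue $d$, the residual $E$ is supported on its orthogonal complement, and the $(n,d,\lambda)$-hypothesis gives $\|E\|_{\mathrm{op}} \le \lambda$. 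Setting $P = \tfrac{d}{n} \mathbf{1}_U \mathbf{1}_U^\top$ (rank one) and $R = D_U E D_U$ (with $\|R\|_{\mathrm{op}} \le \lambda$), we have $D_U A D_U = P + R$.

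Next I would expand $\operatorname{Tr}((P+R)^m)$ into $2^m$ ordered monomials. The all-$P$ term is the main term: from the rank-one collapse $P^a = (d|U|/n)^{a-1} P$, one gets $\operatorname{Tr}(P^m) = d^m|U|^m/n^m$. I would group the remaining terms by the total number $B$ of $R$-factors. For $1 \le B \le m-1$, each monomial has the cyclic shape $P^{a_1}R^{b_1}\cdots P^{a_k}R^{b_k}$ with $a_i, b_i \ge 1$; repeatedly using the rank-one collapse on the $P$-blocks and the estimate $|\mathbf{1}_U^\top R^b \mathbf{1}_U| \le \|R\|_{\mathrm{op}}^{b}\|\mathbf{1}_U\|_2^2 \le \lambda^b |U|$ on each $R$-block produces the uniform bound $(d|U|/n)^{m-B}\lambda^B$ on the trace. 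Summing the $\binom{m}{B}$ such terms and using $\lambda n/(d|U|) = o(1)$, the dominant contribution comes from $B=1$, of size $O(\lambda (d|U|/n)^{m-1})$, which matches the first part of the claimed error. The all-$R$ case is handled separately: rather than the crude bound $\lambda^m|U|$, one uses $\operatorname{Tr}(R^m) \le \|R\|_{\mathrm{op}}^{m-2}\|R\|_F^2$, then computes $\|R\|_F^2 = \sum_{x,y\in U}(A(x,y)-d/n)^2$ and invokes the expander mixing lemma to evaluate $e(U)$, giving $\|R\|_F^2 = O(d|U|^2/n)$. This yields $\operatorname{Tr}(R^m) = O(\lambda^{m-2}d|U|^2/n)$, matching the second part of the claimed error.

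The main obstacle I anticipate is the sharp bound on $\operatorname{Tr}(R^m)$: the naive operator-norm bound $\lambda^m|U|$ is far larger than the claimed $\lambda^{m-2}d|U|^2/n$, and one must trade two factors of $\|R\|_{\mathrm{op}}$ for $\|R\|_F^2$ and apply the expander mixing lemma separately to evaluate the Frobenius norm. A secondary technical point is verifying that the $B \ge 2$ mixed monomials contribute collectively only $O((\lambda n/(d|U|))^2)$ times the main term, so that the $B=1$ piece really dominates all mixed errors. An alternative route proves the result by induction on $m$ using Proposition \ref{keylemma} applied to a pair of opposing edges of the cycle — this is particularly clean for $m=4$ and yields a recursion for general $m$ — but the bookkeeping of error terms is, I believe, more transparent in the operator-trace framework above.
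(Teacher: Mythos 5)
The paper does not prove this statement; it is quoted verbatim as Theorem 1.5 of \cite{PSTT}. Your spectral argument is correct and complete, and it is essentially the standard (and, to my knowledge, the original) proof: writing $C_m(U)=\operatorname{Tr}\bigl((D_UAD_U)^m\bigr)$, decomposing $A=\tfrac{d}{n}J+E$, collapsing the rank-one $P$-blocks so that each mixed monomial with $B$ factors of $R$ is at most $(d|U|/n)^{m-B}\lambda^{B}$ (whence $B=1$ dominates and gives $\lambda|U|^{m-1}d^{m-1}/n^{m-1}$), and treating the all-$R$ term via $\operatorname{Tr}(R^m)\le\|R\|_{\mathrm{op}}^{m-2}\|R\|_F^2$ with $\|R\|_F^2=O(d|U|^2/n)$ from the expander mixing lemma, which is precisely the source of the $\lambda^{m-2}|U|^2d/n$ term.
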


The following is a straightforward technical lemma used in the arguments below, showing that subsets of pseudo-random graphs cannot have too many vertices of too large or small degree.

\begin{lemma}\label{max degree lemma}
Let $G$ be an $(n, d, \lambda)$-graph. Assume $U\subset V(G)$ with $\lambda (n/d) = o(|U|)$, then there exists a subset $U'\subset U$ such that $|U'|=(1-o(1)) |U|$ and 
\[\frac{|U|d}{2n}\le \sum_{v\in U}\sum_{u\sim v}1\le \frac{2|U|d}{n},\]
for any $u\in U'$.
\end{lemma}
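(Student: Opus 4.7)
The plan is to apply the expander mixing lemma (Lemma \ref{th:expanderMixing}) to bound the sizes of the ``bad'' subsets of $U$ consisting of vertices whose degree into $U$ deviates too much from the expected value $|U|d/n$. Define
\[B_{\text{hi}}=\left\{u\in U : |N(u)\cap U|>\tfrac{2|U|d}{n}\right\},\qquad B_{\text{lo}}=\left\{u\in U : |N(u)\cap U|<\tfrac{|U|d}{2n}\right\},\]
and set $U':=U\setminus (B_{\text{hi}}\cup B_{\text{lo}})$. The task reduces to showing $|B_{\text{hi}}|,|B_{\text{lo}}|=o(|U|)$.

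For the upper bound, I would apply Lemma \ref{th:expanderMixing} with $f=\mathbf{1}_{B_{\text{hi}}}$ and $g=\mathbf{1}_U$. This yields
\[\sum_{u\in B_{\text{hi}}}|N(u)\cap U|=\langle f,Ag\rangle\le \frac{d|B_{\text{hi}}||U|}{n}+\lambda\sqrt{|B_{\text{hi}}||U|}.\]
On the other hand, by the definition of $B_{\text{hi}}$, the left-hand side is strictly greater than $\frac{2d|B_{\text{hi}}||U|}{n}$. Combining the two inequalities and rearranging gives
\[\frac{d|B_{\text{hi}}||U|}{n}<\lambda\sqrt{|B_{\text{hi}}||U|},\qquad\text{so}\qquad |B_{\text{hi}}|<\frac{\lambda^2 n^2}{d^2|U|}.\]
The hypothesis $\lambda n/d=o(|U|)$ immediately gives $|B_{\text{hi}}|=o(|U|)$.

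A symmetric argument handles $B_{\text{lo}}$: applying the expander mixing lemma to $\mathbf{1}_{B_{\text{lo}}}$ and $\mathbf{1}_U$ yields the lower bound $\sum_{u\in B_{\text{lo}}}|N(u)\cap U|\ge \frac{d|B_{\text{lo}}||U|}{n}-\lambda\sqrt{|B_{\text{lo}}||U|}$, while the definition of $B_{\text{lo}}$ forces this sum to be less than $\frac{d|B_{\text{lo}}||U|}{2n}$. Comparing these bounds produces $|B_{\text{lo}}|<\frac{4\lambda^2 n^2}{d^2|U|}=o(|U|)$. Therefore $|U'|=|U|-|B_{\text{hi}}|-|B_{\text{lo}}|=(1-o(1))|U|$, and by construction every $u\in U'$ satisfies $\frac{|U|d}{2n}\le |N(u)\cap U|\le \frac{2|U|d}{n}$, which (interpreting the displayed double sum in the statement as the degree of $u$ into $U$) is exactly what the lemma asserts. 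There is no real obstacle here — the only subtlety is verifying that the error term $\lambda\sqrt{|B||U|}$ coming from the expander mixing lemma is dominated by the main term $d|B||U|/n$, which is precisely the content of the hypothesis $\lambda n/d=o(|U|)$.
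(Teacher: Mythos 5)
Your proof is correct and is essentially identical to the paper's: both split off the high- and low-degree vertices, apply the expander mixing lemma to each bad set against $U$, and deduce each has size at most $O(\lambda^2 n^2/(d^2|U|)) = o(|U|)$. No further comment is needed.
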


\begin{proof}
Let $L\subset U$ be the subset of vertices of $U$ with more than $2\frac{|U|d}{n}$ neighbors in $U$ and $S$ the subset of vertices with fewer than $\frac{1}{2}\frac{|U|d}{n}$ neighbors in $U$. Then by Lemma \ref{th:expanderMixing} we have that 
\[
|L| \cdot 2\frac{|U|d}{n} \leq e(L,U) \leq |L||U|\frac{d}{n} + \lambda \sqrt{|L||U|},
\]
which implies that 
\[
|L| \leq \frac{\lambda^2 n^2}{d^2|U|} = o(|U|),
\]
by the assumption on the size of $U$. Similarly, $|S| = o(|U|)$ and taking $U' = U \setminus (S\cup L)$ gives the result.

\end{proof}

\section{Counting configurations}\label{configurations section}
In this section, we count several configurations which we will need in order to prove Theorems \ref{thm1} and \ref{thm2}. The most important configurations are pictured in Figure \ref{ref_label_overall}.

Let $H_1$ be the number of $5$-tuples $(x, y, z, u, v)\in U^5$ such that 
\[ x\sim  y, y\sim z, z\sim u, u\sim v, u\sim x.\]
Let $H_2$ be the number of $5$-tuples $(x, y,z, u, v)\in U^5$ such that 
\[x\sim u, u\sim z, z\sim y, x\sim y, u\sim v, v\sim y\]

That is, $H_2$ is a copy of $K_{2,3}$.  Let $H_3$ be the number of $7$-tuples $(x, y, z, u, v, u', x')\in U^7$ such that
\[x\sim y, y\sim z, z\sim u, u\sim x, u\sim v,\]\[ v\sim u', u'\sim z, u'\sim x', x'\sim y.\]

We define two more configurations which are related to $H_3$. Let $H_3^+$ be the same as configuration $H_3$ with the additional condition that $x\sim u'$. And let $H_3^-$ be the same configuration as $H_3$ with the additional restriction that $x$ and $u'$ are the same vertex. That is, $H_3^-$ is $6$ vertices $x, y, z, u, v, x'$ in $U$ satisfying
\[x\sim y, ~y\sim z, ~z\sim u, ~u\sim x, ~x\sim x', ~x'\sim y,~x\sim v, ~v\sim u, ~x\sim z.\]

\begin{figure}
    \centering
  
        \includegraphics[width=1\textwidth]{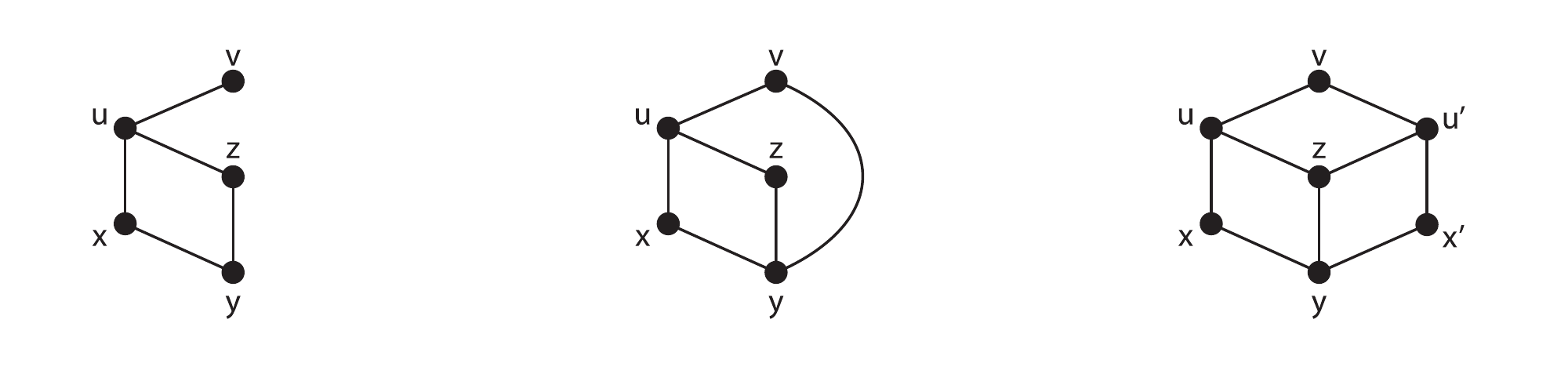}
    \caption{Configurations $H_1, H_2,$ and $H_3$}
    \label{ref_label_overall}
\end{figure}

\begin{figure}
    \centering
        \includegraphics[width=1\textwidth]{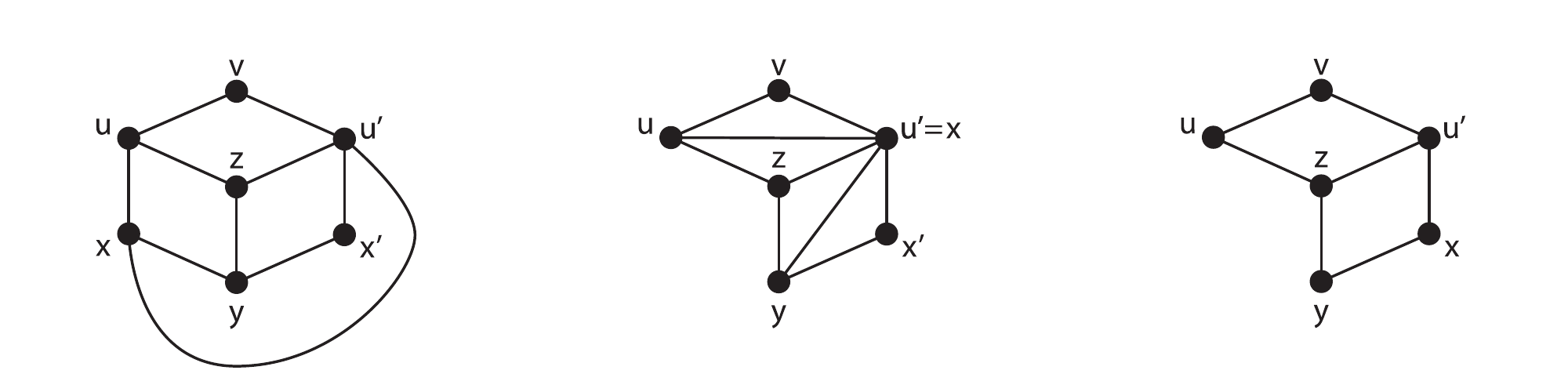}
    \caption{Configurations $H_3^+, H_3^-,$ and $H_4$}
    \label{ref_label_overall2}
\end{figure}


In this section, we show that for large enough subsets $U$, there will be a subset $U'\subset U$ with $|U'| = (1-o(1))|U|$ such that we can estimate the number of configurations in $U'$ precisely. We do not know if passing to the subset $U'$ is necessary. However, we do this so that we can bound the number of $K_{1,3}$ and $K_{1,4}$ in our subsets. We will be interested in subsets $U$ which satisfy $\lambda \frac{n}{d} = o(|U|)$. In some regimes, sets of this size do not have the expected number of stars on $3$ and $4$ edges. For example, given an $(n, d, \lambda)$ graph with $d = \sqrt{n}$, if a set $U$ has size $n^{3/4}$, then we expect $K_{1,4}(U)$ to be $\Theta\left( n^{7/4} \right)$. On the other hand, taking a set of $n^{1/4}$ vertices and their neighborhoods gives a set $U$ with at least $\Omega\left(n^{9/4}\right) $ copies of $K_{1,4}$. Note that if we wanted to apply Theorem \ref{alon counting}, we would require $\lambda \left(\frac{n}{d}\right)^4 = o(|U|)$ which is much larger than the size of sets with which we wish to work. For our application to VC-dimension, passing to a subset $U'$ does not weaken the result, and so for the trade-off of weakening the theorems in this section, we obtain in some cases the best possible results for VC-dimension.

\begin{theorem}\label{count-c1}
Let $G$ be an $(n, d, \lambda)$-graph and $U$ a subset of vertices with $ \lambda n/d = o(|U|)$. Then there is a subset $U'\subset U$ with $|U'| =(1-o(1)) |U|$ such that
\[\left\vert H_1(U')-\frac{|U'|^5d^5}{n^5} \right\vert\ll \lambda^2\frac{|U'|^3d^2}{n^2}+\lambda \frac{|U'|^4d^4}{n^4}.\]
\end{theorem}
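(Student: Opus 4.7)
The plan is to apply Proposition \ref{keylemma} after rewriting $H_1(U')$ as a structured double sum over 4-cycles with a pendant weight. First, I apply Lemma \ref{max degree lemma} to pass to a subset $U' \subseteq U$ with $|U'| = (1-o(1))|U|$ such that $d_v := |N(v) \cap U'| \leq 2|U'|d/n$ for every $v \in U'$. This uniform upper bound on the restricted degrees will be used repeatedly to control the third and fourth moments of $d_v$ that appear in the norms below.

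Setting $\chi := \chi_{U'}$ and summing the pendant vertex $v$ out of the definition of $H_1$ produces a factor of $d_u$, so that
\[
H_1(U') \;=\; \sum_{x \sim u,\; y \sim z} f(x,y)\, g(u,z),
\]
where $f(x,y) = \chi(x)\chi(y) A_{xy}$ captures the edge $x \sim y$ and $g(u,z) = \chi(u)\chi(z) A_{uz}\, d_u$ captures the edge $u \sim z$ together with the pendant contribution at $u$. This is precisely the form to which Proposition \ref{keylemma} applies, with our $(x,y,u,z)$ playing the role of the proposition's $(x,y,z,w)$. The main term is
\[
\frac{d^2}{n^2}\|f\|_1 \|g\|_1 \;=\; \frac{d^2}{n^2} \cdot 2e(U') \cdot P_2(U'),
\]
and combining the expander mixing lemma for $2e(U') = \tfrac{|U'|^2 d}{n} + O(\lambda|U'|)$ with a standard spectral expansion giving $P_2(U') = \tfrac{|U'|^3 d^2}{n^2} + O(\lambda|U'|^2 d/n)$ yields $\frac{|U'|^5 d^5}{n^5} + O\!\left(\lambda \tfrac{|U'|^4 d^4}{n^4}\right)$. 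The cross error is absorbed into the bound we want to prove.

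It remains to estimate the four norms appearing on the right-hand side of Proposition \ref{keylemma}. Using $\|f\|_2^2 = 2e(U') = O(|U'|^2 d/n)$ together with $\|g\|_2^2 = \sum_u \chi(u) d_u^3 \leq \tfrac{2|U'|d}{n}\,P_2(U') = O(|U'|^4 d^3/n^3)$ — where we exploit the degree bound from step one to pull out a single factor of $\max_u d_u$ — gives $\lambda^2\|f\|_2\|g\|_2 = O(\lambda^2 |U'|^3 d^2/n^2)$, which matches the first term of the stated error. For the marginal norms, a direct computation gives $\|F\|_2^2 = \|F'\|_2^2 = P_2(U') = O(|U'|^3 d^2/n^2)$, and bounding $\|G\|_2^2 = \sum_u \chi(u) d_u^4$ and $\|G'\|_2^2 = \sum_z \chi(z)\left(\sum_u \chi(u) d_u A_{uz}\right)^2$ by pulling out $\max_u d_u^2$ and reducing to $P_2(U')$ yields $\|G\|_2, \|G'\|_2 = O(|U'|^{5/2} d^2/n^2)$. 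These combine to $\frac{d\lambda}{n}(\|F\|_2\|G\|_2 + \|F'\|_2\|G'\|_2) = O(\lambda |U'|^4 d^4/n^4)$, matching the second term.

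The principal obstacle — and the only reason for passing to $U'$ — is that the norms $\|g\|_2$, $\|G\|_2$, and $\|G'\|_2$ involve third and fourth moments of the restricted degree sequence. In the size regime $|U| \gg \lambda n/d$ that we work in, a small number of atypically high-degree vertices could blow these moments far beyond their expected values, wrecking the estimate. Lemma \ref{max degree lemma} eliminates exactly this possibility, allowing us to replace $\max_u d_u$ by $O(|U'|d/n)$ and converting the moment bounds into powers of $P_2(U')$; the passage to $U'$ is free since $|U'| = (1-o(1))|U|$.
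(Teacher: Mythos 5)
Your proposal is correct and follows essentially the same route as the paper: pass to $U'$ via Lemma \ref{max degree lemma} to control restricted degrees, split $H_1$ into a plain edge factor and an edge-with-pendant factor, and apply Proposition \ref{keylemma}, with all six norms bounded exactly as the paper does (the only cosmetic difference is that you attach the pendant weight to $g$ where the paper attaches it to $f$, and you are slightly more explicit about the error in the main term $\frac{d^2}{n^2}\|f\|_1\|g\|_1$).
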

\begin{proof}
By Lemma \ref{max degree lemma}, we may choose a subset $U'\subset U$ with $|U'| =(1-o(1)) |U|$ such that no vertex in $U'$ has degree more than $\frac{2|U|d}{n}$ in $U$. 

Define $f(x,y)$ to be the number of paths of length $2$ of the form $(x, y, u)\in (U')^3$ with $x\sim y, y\sim u$. Define $g(z,w)=1_{z\sim w},$ where the notation $1_{z \sim w}$ denotes the function that returns a 1 if $z\sim w$ and $0$ otherwise. 

Since $||f||_1$ is equal to the number of paths of length  $2$ with vertices in $U'$, it follows from Proposition \ref{paths} that
\[||f||_1= P_2(U')=(1+o(1))\frac{|U'|^3d^2}{n^2}\]
and  
\[||g||_1= P_1(U') = (1+o(1))\frac{|U'|^2d}{n},\]
under the condition that  $\lambda \frac{n}{d} = o(|U'|)$. 

We now need to compute $||f||_2, ||g||_2, ||F||_2, ||G||_2, ||F'||_2, ||G'||_2$ to apply Lemma \ref{keylemma}. We will be slightly pedantic and write out more details for the estimate of $||f||_2,$ and let the reader fill in the very similar details that we omit from the sequel.
\[||f||_2^2 = \hskip-.5em\sum_{x,y\in U'}\hskip-.5em f(x,y)f(x,y)=\hskip-.5em\sum_{x,y\in U'}|\{(x,y,u)\in (U')^3: x\sim y,y\sim u\}|^2\]
\[=\hskip-.5em\sum_{x,y\in U'}\hskip-.5em|\{(x,y,u)\in V^3: x\sim y,y\sim u\}|\cdot|\{(x,y,u')\in V: x\sim y,y\sim u'\}|.\]

So for an adjacent pair $x,y\in U',$ the contribution to $||f||_2^2$ is the number of pairs of vertices $u$ and $u'$ in $V$ such that $y$ is adjacent to both. In the case that $u$ and $u'$ are distinct, we get a 3-star. In the (degenerate) case that $u=u'$, we get a path of length 2. Note that if $x$ and $y$ are not adjacent, they will not contribute to the sum. So $||f||_2^2$ is the sum of the number of 3-stars in $G$ plus the number of 2-paths in $G.$ We now set out to estimate these quantities.

We already saw that Proposition \ref{paths}, that $P_2(U')=(1+o(1))|U'|^3d^2n^{-2}.$ So it only remains to estimate the number of 3-stars in $U'$ and verify that this is much more than $P_2(U').$ Using the fact that each vertex in $U'$ has at most $2 |U|d/n$ neighbors in $U$, we can conclude that the number of 3-stars is $O\left( |U|^4d^3n^{-3}\right)$, and this clearly dominates our estimate of $P_2(U'),$ as $\lambda n / d = o(U)$ by assumption, and $\lambda \geq 1$ (by considering the trace of $A^2$ one can see that $\lambda \gtrsim \sqrt{d}$). Putting everything together gives us the estimate
\[||f||_2^2\ll \frac{|U|^4d^3}{n^3}.\]

Moreover, by analogous arguments, neglecting similarly degenerate terms, one has
\[||g||_2^2=||g||_1^2=(1+o(1))\frac{|U'|^2d}{n}.\]
Similarly, $||F||_2^2$ is bounded by the number of paths of length $4$, and $||F'||_2^2$ is bounded by the number of stars with $4$ edges, i.e. 
\[||F||_2^2, ||F'||_2^2\ll \frac{|U|^5d^4}{n^4}.\]

We now bound $||G'||_2^2$ and $||G||_2^2$. It is clear that these are at most the number of paths of length $2$, and so we have \[||G'||_2^2, ||G||_2^2 \ll|U|^3d^2/n^2. \]

Putting these bounds to Proposition \ref{keylemma} and noting that 
\[
\sum_{x \sim z, y\sim w} f(x,y)g(z,w) = H_1(U')
\]gives us 
\[\left\vert H_1(U')-\frac{|U'|^5d^5}{n^5} \right\vert\ll \lambda^2\frac{|U'|^3d^2}{n^2}+\frac{d\lambda}{n}\frac{|U'|^4d^3}{n^3}.\]
This completes the proof.
\end{proof}

\begin{theorem}\label{count-H_2}
Let $G$ be an $(n, d, \lambda)$-graph and $U$ a subset of vertices such that $\lambda n/d = o(|U|)$. Then there is a subset $U'\subset U$ with $|U'| = (1-o(1))|U|$ such that
\[\left\vert H_2(U')-(1+o(1))\frac{|U'|^5d^6}{n^6} \right\vert\ll \lambda^2\cdot \frac{|U'|^3d^2}{n^2}.\]
\end{theorem}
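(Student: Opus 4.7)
The plan is to expand $H_2(U')$ around its expected value and use the known estimates for $P_2(U')$ and $C_4(U')$ to control the resulting fluctuation.

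I would first apply Lemma \ref{max degree lemma} to pass to a subset $U' \subset U$ with $|U'| = (1-o(1))|U|$ on which every vertex $u$ satisfies $d_U(u) \leq 2|U|d/n$. Writing $N(u,y) := |\{v \in U' : v \sim u, v \sim y\}|$ for the number of common neighbors of $u$ and $y$ inside $U'$, the definition of $H_2$ gives $H_2(U') = \sum_{u,y \in U'} N(u,y)^3$, and the degree bound yields $N(u,y) \ll |U'|d/n$ uniformly.

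Next, set $\mu := |U'|d^2/n^2$ and $\delta(u,y) := N(u,y) - \mu$. Expanding the cube gives
\[
H_2(U') = |U'|^2\mu^3 + 3\mu^2\sum_{u,y \in U'}\delta(u,y) + 3\mu\sum_{u,y \in U'}\delta(u,y)^2 + \sum_{u,y \in U'}\delta(u,y)^3,
\]
in which the leading term is exactly $|U'|^5d^6/n^6$. The first-moment sum equals $P_2(U') - |U'|^2\mu$; an expander-mixing estimate applied to $P_2(U') = \sum_{v \in U'} d_{U'}(v)^2$ gives $|P_2(U') - |U'|^3 d^2/n^2| \ll \lambda d |U'|^2/n + \lambda^2 |U'|$. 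The second-moment sum equals $C_4(U') - 2\mu P_2(U') + \mu^2|U'|^2$; combining the bound on $P_2$ with Theorem \ref{cycle-main1} for $m = 4$ gives $|\sum\delta^2| \ll \lambda|U'|^3 d^3/n^3 + \lambda^2 |U'|^2 d/n$. A direct calculation then shows that both $3\mu^2|\sum\delta|$ and $3\mu\sum\delta^2$ contribute at most $o(|U'|^5 d^6/n^6) + O(\lambda^2 |U'|^3 d^2/n^2)$ under the hypothesis $\lambda n/d = o(|U'|)$.

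The main obstacle is controlling the third-moment sum $\sum\delta^3$. Using the pointwise bound $|\delta(u,y)| \ll |U'|d/n$ from Lemma \ref{max degree lemma},
\[
\Bigl|\sum_{u,y \in U'} \delta(u,y)^3 \Bigr| \ll \frac{|U'|d}{n}\sum_{u,y \in U'} \delta(u,y)^2 \ll \lambda \frac{|U'|^4 d^4}{n^4} + \lambda^2 \frac{|U'|^3 d^2}{n^2}.
\]
The second summand matches the target error exactly. The first summand $\lambda|U'|^4 d^4/n^4$ is handled by a dichotomy: if $|U'| \gg \lambda n^2/d^2$ then it is $o(|U'|^5 d^6/n^6)$ and is absorbed into the $(1+o(1))$-factor on the main term, while if $|U'| = O(\lambda n^2/d^2)$ then $\lambda|U'|^4 d^4/n^4 = O(\lambda^2 |U'|^3 d^2/n^2)$ is absorbed into the additive error. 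An alternative route, alluded to in the introduction, would apply Cauchy--Schwarz recursively to bound $\sum\delta^4$ in terms of $K_{2,4}$-counts and then iterate; this does not improve the final estimate but explains the ``recursive Cauchy--Schwarz'' remark.
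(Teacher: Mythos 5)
Your proof is correct, but it takes a genuinely different route from the paper's. The paper writes $H_2(U')=\sum_{u}\langle f_u, Ag_u\rangle$ with $f_u(y)$ the number of $4$-cycles having $u,y$ as a diagonal, applies the expander mixing lemma once per $u$, identifies the main term as $\tfrac{d}{n}H_1(U')$ (so it needs Theorem \ref{count-c1} as input), and controls the error via the self-referential bound $\sum_{u,y}f_u(y)^2\le H_2(U')\cdot\tfrac{2|U|d}{n}$, resolved as a quadratic inequality in $H_2(U')^{1/2}$ --- that is the ``recursive Cauchy--Schwarz'' step. You instead expand $\sum_{u,y}N(u,y)^3$ directly around the mean $\mu=|U'|d^2/n^2$, reducing everything to quantitative counts of $P_2(U')$ and $C_4(U')$ together with the pointwise bound $|\delta|\ll |U'|d/n$ from the degree truncation; this avoids both the $H_1$ count and the recursion and is more elementary. (Note that Proposition \ref{paths} only states $P_2(U')=(1+o(1))|U'|^3d^2/n^2$; the quantitative form $|P_2(U')-|U'|^3d^2/n^2|\ll \lambda |U'|^2d/n+\lambda^2|U'|$ that you use is correct and standard, via $d_{U'}(y)=|U'|d/n+e(y)$ with $\sum_y e(y)^2\le\lambda^2|U'|$, but you should prove it rather than cite it.) Reassuringly, both arguments produce the identical troublesome cross term $\lambda|U'|^4d^4/n^4$, the geometric mean of the main term and the stated error. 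One small point of care in absorbing it: the split ``$|U'|\gg\lambda n^2/d^2$ versus $|U'|=O(\lambda n^2/d^2)$'' is not quite clean at the boundary (with the paper's convention $\gg$ means $\Omega$, which yields $O$ of the main term, not $o$). The tidier statement is that for every $\epsilon>0$,
\[
\lambda\frac{|U'|^4d^4}{n^4}\;\le\;\frac{\epsilon}{2}\cdot\frac{|U'|^5d^6}{n^6}+\frac{1}{2\epsilon}\cdot\lambda^2\frac{|U'|^3d^2}{n^2},
\]
and this is exactly the sense in which the $(1+o(1))$ in the theorem's conclusion must be read; the paper's own error term, once the recursion is unwound, requires the same interpretation, so this is not a defect specific to your argument.
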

\begin{proof}
By Lemma \ref{max degree lemma}, we may choose a subset $U'\subset U$ with $|U'| =(1-o(1)) |U|$ such that no vertex in $U'$ has degree more than $\frac{2|U|d}{n}$ in $U$. As before, we will show that this subset satisfies the conclusion.

We recall that $H_2(U')$ counts the number of $5$-tuples $(x, y,z, u, v)\in (U')^5$ such that 
\[x\sim u, u\sim z, z\sim y, x\sim y, u\sim v, v\sim y.\]
That is, it is the number of (labeled, possibly degenerate) $K_{2,3}$ in $U'$. For $u, y, z\in U'$, define $f_u(y)$ as the number of cycles of length $4$ with vertices in $U'$ such that $u$ and $y$ form a diagonal, define $g_u(z)=1$ if $u\sim z$ and $0$ otherwise. 
Summing over all $u$ in $U'$ and $y\sim z$, we have,
\[H_2(U')=\sum_{u\in U'}\sum_{y\sim z}f_u(y)g_u(z) = \sum_{u\in U'} \langle f_u, Ag_u\rangle.\]
Applying Lemma \ref{th:expanderMixing} for each choice of $u$ and summing gives us
\[\left\vert H_2(U')- \frac{d}{n}\sum_{u}\sum_{y\in U'}f_u(y)\sum_{z\in U'}g_u(z)\right\vert\ll \lambda\sum_{u\in U'}\left(\sum_{y}f_u(y)^2\right)^{1/2}\left(\sum_{z}g_u(z)^2\right)^{1/2}.\]
Notice that 
\[\sum_{u, y, z\in U'}f_u(y)g_u(z)=|H_1(U')|.\]
By the Cauchy-Schwarz inequality, one has 
\[\sum_{u\in U'}\left(\sum_{y}f_u(y)^2\right)^{1/2}\left(\sum_{z}g_u(z)^2\right)^{1/2}\le \left(\sum_{u, y}f_u(y)^2\right)^{1/2}\cdot \left(\sum_{u, z}g_u(z)^2\right)^{1/2}.\]

Now $\sum f_u(y)^2$ is the number of (labeled, possibly degenerate) copies of $K_{2,4}$. This can be bounded above by the number of $K_{2,3}$ times the degree of one of the vertices in the part of size $2$ (note that this is on average giving up a factor of about $\frac{d}{n}$). Using the fact that each vertex in $U'$ has at most $2 |U|d/n$ neighbors in $U$, we have
\[\sum_{u}\sum_{y}f_u(y)^2\le |H_2(U')|\cdot \frac{2|U|d}{n}.\]

Moreover, the sum $\sum_{u, z}g_u(z)^2$ equals $P_1(U')$, so it is $(1+o(1))|U'|^2d/n$ by Lemma \ref{th:expanderMixing}. 

Putting these computations together and using Theorem \ref{count-c1}, one has 
\[\left\vert H_2(U')-(1+o(1))\frac{|U'|^5d^6}{n^6} \right\vert\ll \lambda^2\cdot \frac{|U'|^3d^2}{n^2}.\]
This completes the proof.

\end{proof}

\begin{remark}\label{remark1}
The above two lemmas say that when $\lambda (n/d)^{3/2} = o(|U|)$, there is a subset $U'\subset U$ with $|U'|\gg |U|$ such that 
\[H_1(U')=(1+o(1))\frac{|U'|^5d^5}{n^5}, ~H_2(U')=o(H_1(U')).\]
\end{remark}

   In the proof of Theorem \ref{count-H_2}, if we assume that any two vertices in $U$ have at most $\gamma$ common neighbors, and $\Lambda=\min\{\gamma, 2|U|d/n\}$, then 
   \[\sum_{u, y}f_u(y)^2\le H_2(U')\cdot \Lambda.\]
This implies the following strengthened version.
\begin{theorem}\label{thm:gamma}
    Let $G$ be an $(n, d, \lambda)$-graph and $U$ a subset of vertices such that $\lambda n/d = o(|U|)$. If any two vertices in $U$ have at most $\gamma$ common neighbors, and $\Lambda=\min\{\gamma, 2|U|d/n\}$, then there is a subset $U'\subset U$ with $|U'| = (1-o(1))|U|$ such that
\[\left\vert H_2(U')-(1+o(1))\frac{|U'|^5d^6}{n^6} \right\vert\ll \lambda^2\cdot \frac{|U'|^2d}{n}\cdot \Lambda.\]

\end{theorem}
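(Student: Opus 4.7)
The proof follows the template of the proof of Theorem \ref{count-H_2}, with one small refinement in the estimate of the number of labeled $K_{2,4}$'s in $U'$. As in that proof, begin by applying Lemma \ref{max degree lemma} to extract a subset $U' \subset U$ with $|U'| = (1-o(1))|U|$ in which every vertex has at most $2|U|d/n$ neighbors in $U$. Define $f_u(y)$ to be the number of labeled $4$-cycles in $U'$ with $u,y$ a diagonal, and $g_u(z) = \mathbf{1}_{u \sim z}$. Writing $H_2(U') = \sum_{u \in U'} \langle f_u, A g_u \rangle$ and invoking Lemma \ref{th:expanderMixing} for each $u$, the main term aggregates to $\frac{d}{n}\sum_{u,y,z \in U'} f_u(y) g_u(z) = \frac{d}{n} H_1(U')$, which is $(1+o(1))\frac{|U'|^5 d^6}{n^6}$ by Theorem \ref{count-c1}.

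The new ingredient enters in bounding the remaining error, which after Cauchy-Schwarz takes the form
\[\lambda\Bigl(\sum_{u,y} f_u(y)^2\Bigr)^{1/2}\Bigl(\sum_{u,z} g_u(z)^2\Bigr)^{1/2}.\]
The sum $\sum_{u,y} f_u(y)^2$ counts labeled (possibly degenerate) copies of $K_{2,4}$ in $U'$, and by peeling off one vertex of the 4-part one may bound it by $H_2(U')$ times the maximum, over pairs in $U'$, of the number of common neighbors in $U'$. Under the hypothesis of the theorem this maximum is at most $\gamma$; by the max-degree pruning it is also at most $2|U|d/n$; hence it is at most $\Lambda$, yielding the sharper estimate
\[\sum_{u,y} f_u(y)^2 \le \Lambda \cdot H_2(U').\]
This is the sole place where the new hypothesis enters, and it replaces the crude bound $\frac{2|U|d}{n} \cdot H_2(U')$ used in Theorem \ref{count-H_2}. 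Combining with $\sum_{u,z} g_u(z)^2 = P_1(U') = (1+o(1))\frac{|U'|^2 d}{n}$ from Proposition \ref{paths} produces the error estimate $\lambda \sqrt{\Lambda \cdot H_2(U') \cdot \frac{|U'|^2 d}{n}}$.

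Putting everything together yields the self-bounding inequality
\[\Bigl| H_2(U') - (1+o(1))\tfrac{|U'|^5 d^6}{n^6}\Bigr| \ll \lambda \sqrt{\Lambda \cdot \tfrac{|U'|^2 d}{n} \cdot H_2(U')}.\]
The main (minor) obstacle is now purely algebraic: viewing this as a quadratic inequality in $\sqrt{H_2(U')}$ and separating into the cases $H_2(U') \le 2M$ and $H_2(U') > 2M$, where $M$ denotes the main term, one concludes in the standard way that $|H_2(U') - M| \ll \lambda^2 \cdot \Lambda \cdot \tfrac{|U'|^2 d}{n}$, as claimed. No new subgraph-counting ideas are needed beyond those of Theorem \ref{count-H_2}; the content of this theorem is precisely the observation that the max-degree bound can be replaced by the common-neighbor bound $\gamma$ in that single step.
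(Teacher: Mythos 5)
Your proposal is correct and follows exactly the route the paper takes: Theorem \ref{thm:gamma} is obtained by rerunning the proof of Theorem \ref{count-H_2} verbatim, with the single change that the $K_{2,4}$ count $\sum_{u,y}f_u(y)^2$ is bounded by $H_2(U')\cdot\Lambda$ (peeling off one common neighbor of the two vertices in the part of size $2$) rather than by $H_2(U')\cdot\frac{2|U|d}{n}$. Your write-up is in fact slightly more explicit than the paper's, since you spell out the self-bounding step that resolves the appearance of $H_2(U')$ on both sides; just note that the case $H_2(U')\le 2M$ yields the geometric mean $\sqrt{ME}$, which is controlled only after being split between the multiplicative $(1+o(1))$ on the main term and the additive error $E=\lambda^2\Lambda|U'|^2d/n$, exactly as the theorem's statement is phrased.
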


Next we give estimates on $H_3^+$, the supergraph of $H_3$ with respect to the number of copies of $H_3$, and finally we give an upper bound on $H_3^-$. We were not able to give an asymptotic formula for $H_3$ and we leave this as an interesting open problem. Later in the proof of Theorem \ref{thm2} we will give only a lower bound on the number of $H_3$ and this will be enough for our purposes. In order to estimate $H_3^+$ we will first need to estimate one more configuration.

Define $H_4(U')$ to be the number of $6$-tuples $(y, z, u, v, u', x)$ in $(U')^6$ such that 
\[y\sim z, ~z\sim u, ~u\sim v, ~v\sim u', ~u'\sim x, ~x\sim y, ~z\sim u'.\]
\begin{theorem}\label{configuration 4}
Let $G$ be an $(n, d, \lambda)$-graph and $U$ a subset of vertices such that $|U|\gg \lambda (n/d)^{3/2}$. Then there is a subset $U' \subset U$ with $|U'| = (1-o(1))|U|$ such that
\[H_4(U')\ll \frac{|U'|^6d^7}{n^7}.\]
\end{theorem}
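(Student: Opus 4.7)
My plan is to bound $H_4(U')$ by a Cauchy--Schwarz reduction to $H_2(U')$, and then invoke the estimate of Theorem~\ref{count-H_2} (or its refinement, Theorem~\ref{thm:gamma}).

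First, apply Lemma~\ref{max degree lemma} to pass to a subset $U' \subset U$ with $|U'| = (1-o(1))|U|$ on which every vertex has $U'$-degree at most $2|U|d/n$. The structural observation driving the argument is that $H_4$ is the theta graph $\theta(1,3,3)$: the distinguished pair $z, u'$ is joined by the chord $z \sim u'$ together with two internally vertex-disjoint paths of length $3$ (through $y, x$ and through $u, v$). Consequently,
\[H_4(U') = \sum_{z, u' \in U',\, z \sim u'} P_3^{U'}(z, u')^2,\]
where $P_3^{U'}(z, u') = \sum_{u \in U',\, u \sim z} M_{U'}(u, u')$ counts labeled $3$-paths $z \to u \to v \to u'$ with $u, v \in U'$, and $M_{U'}(\cdot, \cdot)$ denotes common neighbors in $U'$.

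Next, apply Cauchy--Schwarz to $P_3^{U'}(z,u')^2$ together with the max-degree bound:
\[P_3^{U'}(z, u')^2 \le d_{U'}(z) \sum_{u \in U',\, u \sim z} M_{U'}(u, u')^2 \le \frac{2|U|d}{n} \sum_{u \in U',\, u \sim z} M_{U'}(u, u')^2.\]
Summing over edges $z \sim u'$ inside $U'$ and swapping the order of summation, the inner factor $\sum_{z \in U':\, z \sim u,\, z \sim u'} 1$ is exactly $M_{U'}(u, u')$, which gives
\[H_4(U') \le \frac{2|U|d}{n} \sum_{u, u' \in U'} M_{U'}(u, u')^3 = \frac{2|U|d}{n}\cdot H_2(U').\]

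Finally, estimate $H_2(U')$ by Theorem~\ref{thm:gamma}, using the standard spectral bound that any two vertices in an $(n, d, \lambda)$-graph have at most $\gamma = O(d^2/n + \lambda^2)$ common neighbors. Under the hypothesis $|U| \gg \lambda (n/d)^{3/2}$, this gives $H_2(U') \ll |U|^5 d^6/n^6$, whence $H_4(U') \ll |U|^6 d^7/n^7$.

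The main technical obstacle is verifying that the error term contributed by $H_2(U')$ is absorbed under the relatively weak hypothesis $|U| \gg \lambda (n/d)^{3/2}$. Using the raw form of Theorem~\ref{count-H_2} only closes the argument under the stronger assumption $|U| \gg \lambda (n/d)^2$; it is precisely to bridge this gap between $(n/d)^{3/2}$ and $(n/d)^2$ that the common-neighbor refinement of Theorem~\ref{thm:gamma} appears essential.
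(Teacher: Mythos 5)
Your structural observation and the reduction itself are correct: $H_4$ is indeed the theta graph $\theta(1,3,3)$, the identity $H_4(U')=\sum_{z\sim u'}P_3^{U'}(z,u')^2$ holds for homomorphism counts, and the Cauchy--Schwarz step followed by swapping the order of summation gives the valid deterministic inequality $H_4(U')\le \tfrac{2|U|d}{n}\sum_{u,u'}M_{U'}(u,u')^3=\tfrac{2|U|d}{n}H_2(U')$. The gap is in the last step: the claim that $H_2(U')\ll |U|^5d^6/n^6$ under $|U|\gg\lambda(n/d)^{3/2}$ is false, and neither Theorem \ref{count-H_2} nor Theorem \ref{thm:gamma} delivers it. Since $H_2$ counts homomorphisms, the degenerate diagonal terms $u=y$ alone contribute $\sum_{u}\deg_{U'}(u)^3\approx |U|^4d^3/n^3$, which exceeds $|U|^5d^6/n^6$ whenever $|U|<(n/d)^3$ --- a regime that contains essentially the entire range of interest. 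Even setting degeneracies aside, the error term of Theorem \ref{thm:gamma} with $\gamma\le d^2/n+\lambda^2$ is $\lambda^2\tfrac{|U|^2d}{n}\gamma$, and requiring this to be $\ll|U|^5d^6/n^6$ forces $|U|\gg\max\{\lambda^{2/3}n^{4/3}/d,\;\lambda^{4/3}(n/d)^{5/3}\}$, both strictly stronger than $\lambda(n/d)^{3/2}$ (e.g.\ with $d=n^{2/3}$, $\lambda=n^{1/3}$, the hypothesis gives $|U|\gg n^{5/6}$ while your route needs $|U|\gg n^{8/9}$, and your final bound on $H_4$ comes out at least $n^{17/6}$ against a target of $n^{16/6}$). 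The reduction to $H_2$ is simply too lossy: multiplying a codegree cube sum by a worst-case degree cannot see the averaging that makes the theorem true at the $\lambda(n/d)^{3/2}$ threshold.

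For comparison, the paper's proof does not pass through $H_2$ at all. It first removes the $o(|U|)$ vertices lying on more than $C|U|^3d^4/n^4$ four-cycles (a stronger pruning than the max-degree truncation of Lemma \ref{max degree lemma}), then applies the tensor-power mixing inequality (Proposition \ref{keylemma}) with $f(z,u')$ equal to the number of paths $z\sim u\sim v\sim u'$ closed by the chord $u'\sim z$, and $g$ the adjacency indicator. Crucially, $\|f\|_2^2\le H_4(U')$ itself, so one obtains the self-referential bound $H_4(U')\ll |U'|^6d^7/n^7+\lambda^2H_4(U')^{1/2}(|U'|^2d/n)^{1/2}+\lambda|U'|^5d^6/n^6$, which solves to $H_4(U')\ll |U'|^6d^7/n^7+\lambda^4|U'|^2d/n$; the second term is absorbed exactly when $|U|\gg\lambda(n/d)^{3/2}$. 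If you want to salvage your approach, you would need to keep the two length-$3$ paths coupled (as the mixing lemma does) rather than decoupling them through a pointwise codegree bound.
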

\begin{proof}
By Theorem \ref{cycle-main1}, we have that the number of $C_4$ in $U$ is 
\[
O\left(\frac{|U|^4d^4}{n^4} \right),
\]
since $|U|\gg \lambda (n/d)^{3/2}$.

Let $L$ be the set of vertices in $U$ which are contained in at least $C|U|^3d^4/n^4$ copies of $C_4$ in $U$, and define $U' = U\setminus L$. Then for any $\epsilon > 0$, there is a $C$ such that $|U'| \geq (1-\epsilon)|U|$ and every vertex in $U'$ has at most $C|U|^3d^4/n^4$ copies of $C_4$ containing it. 

First we define 
\[f(z, u')=\#\{(u, v)\in U'\times U'\colon z\sim u, ~u\sim v, ~v\sim u', ~u'\sim z\},\]
and $g(y, x)=1$ if $y\sim x$ and $0$ otherwise. 

The strategy is to apply Proposition \ref{keylemma}. It is clear that $\sum_{z, u'\in U'}f(z, u')$ counts the number of $4$-cycles in $U'$, so we have that 
\[
\sum_{z,u'\in U'} f(z,u') = O\left( \frac{|U'|^4d^4}{n^4}\right).
\]
 Since $ \lambda (n/d) = o(|U|)$, we have $\sum_{y, z\in U'}g(y, z)=(1+o(1))|U'|^2d/n$ by Proposition \ref{paths}.

On the other hand, $\sum_{z, u'\in U'}f(z, u')^2$ is at most $H_4(U')$, and $\sum_{y, x\in U'}g(y, x)^2=\sum_{y, x\in U'}g(y, x)\ll |U'|^2d/n$. In the setting of Proposition \ref{keylemma}, we can check directly that $\sum_{z}F(z)^2$ counts the number of pairs of $4$-cycles sharing a common point, so is bounded by 

\[
O\left( \frac{|U'|^7d^8}{n^8}\right),
\]
by the definition of $U'$. Moreover, $\sum_{y}G(y)^2$ is at most the number of paths of length $2$ in $U'$, which is at most $O\left( |U'|^3d^2/n^2\right)$ by Proposition \ref{paths}. The same estimates hold for $\sum_{u'}F(u')^2$ and $\sum_{x}G(x)^2$. Putting 
these estimates together and noting that $H_4(U') = \sum_{x\sim u', y\sim z}f(x,y)g(z,u')$ gives us by Proposition \ref{keylemma} that 
\[H_4(U')\ll \frac{|U'|^6d^7}{n^7} +
\lambda^2H_4(U')^{1/2}\left(\frac{|U'|^2d}{n}\right)^{1/2}+\lambda\frac{|U'|^5d^6}{n^6}.\]
Thus, 
\[H_4(U')\ll \frac{|U'|^6d^7}{n^7}+\lambda^4\frac{|U'|^2d}{n}\ll \frac{|U'|^6d^7}{n^7}.\]
\end{proof}

With this in hand, we are able to estimate $H_3^+$

\begin{theorem}\label{count-c3}
Let $G$ be an $(n, d, \lambda)$-graph and $U$ a subset of vertices such that $|U|\gg \lambda (n/d)^{3/2}$. Then there is a subset $U' \subset U$ with $|U'| = (1-o(1))|U|$ such that
\[\left\vert H_3^+(U')- \frac{H_3(U')d}{n}\right\vert\ll \lambda \left(H_2(U')\frac{|U'|d}{n}\right)^{1/2}\left(\frac{|U'|^6d^8}{n^8}+\lambda H_2(U')\right)^{1/2}.\]
\end{theorem}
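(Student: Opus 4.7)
The plan is to first pass to a subset $U' \subset U$ of size $(1-o(1))|U|$ via Lemma \ref{max degree lemma}, then exploit a symmetry of the $H_3$ configuration to express both $H_3(U')$ and $H_3^+(U')$ as sums of a common quantity $\gamma^2$, and finally to apply Lemma \ref{th:expanderMixing} together with Cauchy--Schwarz to isolate the main term $\frac{d}{n}H_3(U')$ with the claimed error.

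The key structural observation is that $H_3$ admits the involution swapping $(y,x')$ with $(u,v)$ and fixing the triple $(u',x,z)$. For each triple $(u',x,z)\in (U')^3$ with $u'\sim z$, define
\[
\gamma(u',x,z) := \bigl|\{(y,x')\in (U')^2 : y\sim x,\ y\sim z,\ x'\sim y,\ x'\sim u'\}\bigr|.
\]
Because the constraints on $(u,v)$ are $u\sim x$, $u\sim z$, $v\sim u$, $v\sim u'$, which have the same form as those on $(y,x')$, the number of $(u,v)$-extensions also equals $\gamma(u',x,z)$. This yields the symmetric identities
\[
H_3(U') = \sum_{\substack{u',x,z\in U' \\ u'\sim z}} \gamma(u',x,z)^2, \qquad H_3^+(U') = \sum_{\substack{u',x,z\in U' \\ u'\sim z,\ u'\sim x}} \gamma(u',x,z)^2,
\]
so that $H_3^+$ is obtained from $H_3$ simply by imposing the extra constraint $u'\sim x$.

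For each fixed pair $(u',z)$ with $u'\sim z$ and $u',z\in U'$, I then apply Lemma \ref{th:expanderMixing} to the function $f(x)=\gamma(u',x,z)^2\,\mathbf{1}_{\{x\in U'\}}$ and the point mass $g=\delta_{u'}$, obtaining
\[
\Bigl|\sum_{x\sim u'} \gamma(u',x,z)^2 - \frac{d}{n}\sum_{x\in U'} \gamma(u',x,z)^2\Bigr| \leq \lambda \Bigl(\sum_{x\in U'} \gamma(u',x,z)^4\Bigr)^{1/2}.
\]
Summing this over all pairs $(u',z)$ with $u'\sim z$ produces exactly the main term $\frac{d}{n}H_3(U')$ and reduces the task to bounding $\lambda\sum_{u'\sim z}\bigl(\sum_x \gamma^4\bigr)^{1/2}$.

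The principal obstacle, and the technical heart of the proof, is to bound this error in the claimed factored form. I would apply Cauchy--Schwarz to split $\bigl(\sum_x \gamma^4\bigr)^{1/2}$ as a product of two pieces so that, after summing in $(u',z)$ and a second Cauchy--Schwarz, one factor collapses to $(H_2(U')\,|U'|d/n)^{1/2}$ while the other collapses to $(|U'|^6 d^8/n^8 + \lambda H_2(U'))^{1/2}$. The first factor should arise from recognising a $K_{2,3}$-substructure inside the pair configuration of two $H_3$-copies sharing $(u',x,z)$, controlled by Theorem \ref{count-H_2} (or its refinement Theorem \ref{thm:gamma}); the second factor should arise from an associated $K_{2,4}$-type count, whose expected size is $|U'|^6 d^8/n^8$ with additive correction $\lambda H_2(U')$ inherited from the error term in the $K_{2,3}$ estimate. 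Identifying the right Cauchy--Schwarz weighting so that both sums reduce cleanly to these quantities, while using only the max-degree control from Lemma \ref{max degree lemma} and the configuration estimates from Section \ref{configurations section}, is the main challenge.
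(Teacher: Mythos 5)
Your symmetric decomposition is correct and rather elegant: for a triple $(u',x,z)$ with $u'\sim z$, the pairs $(y,x')$ and $(u,v)$ in the definition of $H_3$ do satisfy constraints of identical shape, so $H_3(U')=\sum_{u'\sim z}\sum_{x}\gamma(u',x,z)^2$ and $H_3^+(U')$ is the same sum restricted to $x\sim u'$; applying Lemma \ref{th:expanderMixing} with $g=\delta_{u'}$ then correctly isolates the main term $\tfrac{d}{n}H_3(U')$. The gap is entirely in the error term, which you leave open. Your error is $\lambda\sum_{u'\sim z}\bigl(\sum_x\gamma(u',x,z)^4\bigr)^{1/2}$, a \emph{fourth} moment of $\gamma$: it counts an eleven-vertex configuration (four wings glued to a common triple) for which nothing in Section \ref{configurations section} provides an estimate, and you do not supply one. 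The claimed factorization cannot plausibly emerge from this quantity by Cauchy--Schwarz, because the two factors $\bigl(H_2(U')\tfrac{|U'|d}{n}\bigr)^{1/2}$ and $\bigl(\tfrac{|U'|^6d^8}{n^8}+\lambda H_2(U')\bigr)^{1/2}$ are second moments of two \emph{structurally different} counting functions, not two halves of a single symmetric fourth moment. Worse, even at expected values ($\gamma\approx|U|^2(d/n)^4$) your error is of order $\lambda|U|^{13/2}(d/n)^9$, which exceeds the claimed bound $\lambda|U|^6(d/n)^{15/2}$ whenever $d\gg n^{2/3}$ --- a regime that includes the paper's applications for $t\geq 4$ --- so the route cannot recover the stated inequality even if the fourth moment were perfectly understood.

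The paper's proof deliberately breaks the symmetry you exploit so that only second moments appear. It fixes the triple $(y,z,v)$ with $y\sim z$ (not $(u',x,z)$), defines $f_{y,z,v}(u')$ to count the $x'$-extensions and $g_{y,z,v}(x)$ to count the $u$-extensions, and still tests the edge $x\sim u'$, so that $H_3^+(U')=\sum_{y\sim z,\,v}\langle f_{y,z,v},Ag_{y,z,v}\rangle$ with main term $\tfrac{d}{n}\sum\|f\|_1\|g\|_1=\tfrac{d}{n}H_3(U')$. After one Cauchy--Schwarz over $(y,z,v)$, the error splits as $\lambda\bigl(\sum\|f_{y,z,v}\|_2^2\bigr)^{1/2}\bigl(\sum\|g_{y,z,v}\|_2^2\bigr)^{1/2}$, where the first sum is a six-vertex count bounded by $H_2(U')\cdot\tfrac{2|U|d}{n}$ (a $K_{2,3}$ plus a pendant vertex, using the max-degree control from Lemma \ref{max degree lemma}), and the second sum is the six-vertex count $M$, bounded by a further application of Lemma \ref{th:expanderMixing} through $H_4$ and Theorem \ref{configuration 4}, giving $\tfrac{|U'|^6d^8}{n^8}+\lambda H_2(U')$. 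To salvage your approach you would need an independent estimate on $\sum_{u'\sim z}\sum_x\gamma^4$ that is strictly sharper than what the theorem itself asserts; the asymmetric choice of which five of the seven vertices to freeze is the essential idea you are missing.
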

\begin{proof}
As before, by Lemma \ref{max degree lemma}, we pass to a subset $U'\subset U$ with $|U'| =(1-o(1)) |U|$ such that no vertex in $U'$ has degree more than $\frac{2|U|d}{n}$ in $U$. We furthermore may assume that the conclusion of Theorem \ref{configuration 4} holds on $U'$.

We recall that $H_3^+(U')$ counts the number of $7$-tuples $(x, y, z, u, v, u', x')\in (U')^7$ such that
\[x\sim y, y\sim z, z\sim u, u\sim x, u\sim v, v\sim u', u'\sim z, u'\sim x', x'\sim y, x\sim u'.\]
To prove this theorem, we proceed as follows. 

Given $y, z, v\in U'$ such that $y\sim z$, we define 
\[f_{y, z, v}(u'):=\#\{x'\in U'\colon x'\sim y, u'\sim z, u'\sim v, u'\sim x'\},\]
and 
\[g_{y, z, v}(x):=\#\{u\in U'\colon x\sim y, x\sim u, u\sim v, u\sim z\}.\]

Note that $f_{y,z,v}(u')$ is the number of vertices in $N(u')\cap N(y)$ times the indicator that $u' \in N(z)\cap N(v)$ and $g_{y,z,v}(x)$ is the number of vertices in $N(v)\cap N(x)\cap N(z)$ times the indicator that $x\in N(y)$.

Then we have that
\begin{align*}H_3^+(U')&=\sum_{y, z, v, y\sim z}\sum_{x\sim u'}f_{y, z, v}(u')g_{y, z, v}(x)= \sum_{y, z, v, y\sim z} \langle f, Ag \rangle.\end{align*}
We first observe that 
\[\sum_{y, z, v, y\sim z}||f_{y, z, v}||_1||g_{y, z, v}||_1=H_3(U').\]
Next, let $y\sim z$ and $v$ be fixed. For each $u'$, we have that $(f_{y,z,v}(u'))^2$ is at most the number of homomorphisms to $K_{2,3}$ where $y$ and $u'$ are in the part of size $2$ and $z$ is in the part of size $3$, times the indicator that $u'\sim v$. Therefore, this is at most the number of $K_{2,3}$ with $y,u'$ in the part of size $2$ times the degree of $u'$. This implies
\[\sum_{y, z, v, y\sim z}||f_{y, z, v}||_2^2\ll H_2(U')\cdot \frac{|U|d}{n}.\]
In the next step, we bound the sum $\sum_{y, z, v, y\sim z}||g_{y, z, v}||_2^2$ which is much more complicated. We denote this sum by $M$. We have $M$ counts the number of $6$-tuples $(y, z, u, v, u', x)\in U'$ such that 
\[y\sim z, ~z\sim u, ~u\sim v, ~v\sim u', ~u'\sim x, ~x\sim y, ~x\sim u, ~z\sim u'.\] To bound $M$ from above, we proceed as follows. More precisely, given $z\sim u'$, define 
\[k_{z, u'}(u)=\#\{v\in U'\colon v\sim u, v\sim u', z\sim u\}, \]
and 
\[h_{z, u'}(x)=\#\{y\in U'\colon y\sim x, y\sim z, x\sim u'\}.\]
Then it is clear that 
\[M=\sum_{\substack{x, u\in U' \\ x\sim u}}\sum_{\substack{z, u'\in U' \\ z\sim u'}}k_{z, u'}(u)h_{z, u'}(x).\]


To apply Lemma \ref{th:expanderMixing}, the following estimates are needed.  
\[\sum_{\substack{z, u'\in U' \\ ~z\sim u'}}|k_{z, u'}(u)|^2=H_2(U'), ~\sum_{\substack{z, u'\in U' \\ ~z\sim u'}}|h_{z, u'}(x)|^2=H_2(U').\]
We also need to bound $\sum_{z, u'}k_{z, u'}(u)h_{z, u'}(x)$. This sum counts the number of $6$-tuples $(y, z, u, v, u', x)\in U'$ such that 
\[y\sim z, ~z\sim u, ~u\sim v, ~v\sim u', ~u'\sim x, ~x\sim y, ~z\sim u'.\]
Note that this is exactly $H_4(U')$. By Theorem \ref{configuration 4}, we have that 
\[H_4(U')\ll \frac{|U'|^6d^7}{n^7},\]
since $|U'|\gg \lambda(n/d)^{3/2}$.
Applying Lemma \ref{th:expanderMixing}, we have 
\[M = \sum_{z\sim u'} \langle k_{z,u'}, Ah_{z,u'}\rangle \ll \frac{|U'|^6d^8}{n^8}+\lambda H_2(U').\]
Hence, the theorem follows from Lemma \ref{th:expanderMixing}.
\end{proof}

\begin{theorem}\label{count-c4}
Let $G$ be an $(n, d, \lambda)$-graph and $U$ a subset of vertices such that $n\lambda/d = o(|U|)$, then there is a subset $U'\subset U$ of size $(1-o(1))|U|$ such that
   \[H_3^-(U')\ll \frac{|U'|^6d^7}{n^7} + \lambda^2\frac{|U'|^4d^4}{n^4}+\lambda \frac{|U'|^5d^5}{n^5}+\lambda^2 \frac{|U'|^4d^{7/2}}{n^{7/2}}+\lambda^3\frac{|U'|^3d^{5/2}}{n^{5/2}}+\lambda^4\frac{|U'|^2d}{n}.\]
\end{theorem}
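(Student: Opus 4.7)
My plan exploits the structure of $H_3^-$ as a cone: the vertex $x$ is adjacent to each of the other five vertices $y,z,u,v,x'$, while the remaining four edges $x'y,\,yz,\,zu,\,uv$ form a (labeled) path of length $4$. Writing $S_x := N(x)\cap U'$, this yields the identity
\[H_3^-(U') \;=\; \sum_{x \in U'} W_4(S_x),\]
where $W_4(S_x)$ denotes the number of (labeled, possibly degenerate) walks of length $4$ in the induced subgraph $G[S_x]$.

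I would first apply Lemma \ref{max degree lemma} to pass to a subset $U'\subset U$ of size $(1-o(1))|U|$ in which every vertex has at most $2d|U|/n$ neighbors in $U$. Within $H_3^-$, the vertices $v$ and $x'$ appear only through their adjacencies to pairs of core vertices: $v \in N(x)\cap N(u)\cap U'$ and $x' \in N(x)\cap N(y)\cap U'$. After fixing the ``core'' 4-tuple $(x,y,z,u)\in(U')^4$ satisfying the five oriented-diamond edges $xy,\,yz,\,zu,\,ux,\,xz$, the number of extensions $(v,x')$ factors as $|N(x)\cap N(u)\cap U'|\cdot|N(x)\cap N(y)\cap U'|$, which by the max-degree bound is $O(d^2|U|^2/n^2)$. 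Hence
\[H_3^-(U') \;\ll\; \frac{d^2|U|^2}{n^2}\cdot D(U'),\]
where $D(U')$ counts labeled oriented diamonds ($K_4^-$) in $U'$.

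To bound $D(U')$, I would double-count on the two degree-$2$ vertices $y,u$, obtaining
\[D(U') \;=\; 2\sum_{y,u \in U'} e\bigl(N(y)\cap N(u)\cap U'\bigr),\]
and then apply Lemma \ref{th:expanderMixing} to each common neighborhood, using $2e(S)\leq \frac{d}{n}|S|^2+\lambda|S|$. This reduces the problem to controlling $\sum_{y,u} N(y,u)$ and $\sum_{y,u} N(y,u)^2$, where $N(y,u):=|N(y)\cap N(u)\cap U'|$. The first equals the $2$-path count $P_2(U')$ and is controlled by Proposition \ref{paths}; the second is (up to negligible degenerate terms) the $C_4$ count, controlled by Theorem \ref{cycle-main1}. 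Multiplying the resulting bound for $D(U')$ by $d^2|U|^2/n^2$ yields the main term $|U|^6 d^7/n^7$ together with various $\lambda$-error terms.

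The expected main difficulty is extracting the precise half-integer-power error terms $\lambda^2\frac{|U|^4 d^{7/2}}{n^{7/2}}$ and $\lambda^3\frac{|U|^3 d^{5/2}}{n^{5/2}}$ stated in the theorem. These powers strongly suggest that, at an intermediate stage, one should apply Proposition \ref{keylemma} in place of the simpler Lemma \ref{th:expanderMixing}, and then balance the $\lambda^2\|f\|_2\|g\|_2$ and $\frac{d\lambda}{n}\bigl(\|F\|_2\|G\|_2+\|F'\|_2\|G'\|_2\bigr)$ contributions via Cauchy--Schwarz; the $\lambda^4$ term then reflects the compound effect of two separate expander-mixing invocations. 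Replacing the coarse max-degree reduction used in the step leading to $D(U')$ by a finer EML-based decomposition (and careful bookkeeping of how the error components from two successive applications combine) should produce the stated inequality.
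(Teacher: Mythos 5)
Your structural reading of $H_3^-$ is correct ($x$ is adjacent to each of $y,z,u,v,x'$, and the remaining four edges form the path $x'\,y\,z\,u\,v$), the reduction to the diamond count $D(U')$ by pricing the two pendant vertices $v\in N(x)\cap N(u)\cap U'$ and $x'\in N(x)\cap N(y)\cap U'$ at $O(d|U|/n)$ each is valid, and it does reproduce the main term $|U'|^6d^7/n^7$. The gap is in the error terms. Running your plan literally gives
\[
D(U')\le \frac{d}{n}\sum_{y,u}N(y,u)^2+\lambda\sum_{y,u}N(y,u)=\frac{d}{n}\,C_4(U')+\lambda\, P_2(U')\ll \frac{|U'|^4d^5}{n^5}+\lambda^2\frac{|U'|^2d^2}{n^2}+\lambda\frac{|U'|^3d^2}{n^2},
\]
and after multiplying by $d^2|U'|^2/n^2$ the contribution of $\lambda P_2(U')$ is $\lambda|U'|^5d^4/n^4$. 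This is a factor $n/d$ larger than the stated term $\lambda|U'|^5d^5/n^5$, and under the sole hypothesis $\lambda n/d=o(|U|)$ it is not dominated by any term in the theorem: take $d=n^{1/2}$, $\lambda=n^{1/4}$, $|U|=n^{4/5}$, where your term is $n^{2.25}$ while the largest term in the statement is $\lambda^4|U'|^2d/n=n^{2.1}$. The loss comes from applying the edge form of Lemma \ref{th:expanderMixing} to each common neighborhood $N(y)\cap N(u)\cap U'$ separately, which charges $\lambda N(y,u)$ per pair, \emph{after} you have already paid the worst-case factor $(d|U|/n)^2$ for the pendants; the two crude steps compound. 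The weakening matters downstream: with your bound, condition \eqref{four} in the proof of Theorem \ref{thm2} would require $|U|\gg\lambda^{1/2}(n/d)^{5/2}$, which is strictly stronger than the stated hypothesis whenever $\lambda\le (n/d)^{9/7}$ (in particular for the distance and dot-product graphs in dimension $3$).

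You correctly sense that the half-integer powers must arise from a Cauchy--Schwarz between two concentrated cycle counts, but the sentence asserting that a ``finer EML-based decomposition'' will produce the stated inequality is precisely the missing content. The paper's route is different: it discards the two edges $x\sim y$ and $u\sim x$, fixes $z$, sets $f_z(x)=\#\{(v,u)\in (U')^2: z\sim x,\ x\sim v,\ v\sim u,\ u\sim z\}$ and $g_z(x')=\#\{y\in U': z\sim y,\ y\sim x'\}$, writes $H_3^-(U')\le\sum_{z}\langle f_z, Ag_z\rangle$ over the edge $x\sim x'$, applies Lemma \ref{th:expanderMixing} once per $z$, and then Cauchy--Schwarz over $z$ using $\sum_z\|f_z\|_2^2\le C_6(U')$ and $\sum_z\|g_z\|_2^2\le C_4(U')$; expanding $\lambda\, C_6(U')^{1/2}C_4(U')^{1/2}$ yields exactly the four trailing terms of the statement, while the main terms $\frac{d}{n}\sum_z\|f_z\|_1\|g_z\|_1\le \frac{d}{n}\cdot H_1(U')\cdot\frac{2|U|d}{n}$ give the first two. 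To rescue your diamond route you would have to treat the edge $x\sim z$ together with (at least) one pendant in a single application of Lemma \ref{th:expanderMixing} or Proposition \ref{keylemma}, which essentially amounts to reconstructing the paper's choice of $f$ and $g$. As written, the proposal proves a genuinely weaker inequality than the one claimed.
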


\begin{proof}
As before, pass to a subset $U'$ of size $(1-o(1))|U'|$ where each vertex in $U'$ has at most $\frac{2|U|d}{n}$ neighbors in $U$. 

Fix $z\in U'$. Define $f_z(x)$ to be the number of tuples $(z, x, v, u)\in U'^4$ such that $z\sim x, x\sim v, v\sim u, u\sim z$, and $g_z(x')$ to be the number of tuples $(z, x', y)\in U'^3$ such that $z\sim y, y\sim x'$. 

Then it is clear that 
\[H_3^-(U')\le \sum_{z\in U}\sum_{x\sim x'}f_z(x)g_z(x').\]
We first note that the sum $\sum_{z}||f_z||_1||g_z||_1$ is equal to the number of tuples $(z, x, v, u, y, x')$ such that 
\[z\sim x,~ x\sim v,~ v\sim u,~ u\sim z,~ z\sim y,~ y\sim x'.\]
This number is at most $H_1(U')\cdot 2|U|d/n$. To bound $H_3^-(U')$ from above, we will apply Lemma \ref{th:expanderMixing}. To proceed further, we need to bound the $L^2$-norm of $f_z$ and $g_z$. In the way we define the function $f_z$ and $g_z$, it is not hard to see that the sum $\sum_{z}||f_z||_2^2$ is at most the number of $6$-cycles $C_6(U')$, which by Theorem \ref{cycle-main1} is bounded from above by 
\[(1+o(1))\frac{|U'|^6d^6}{n^6}+O\left(\frac{\lambda^4|U'|^2d}{n}\right).\]
Similarly, we also have the sum $\sum_{z}||g_z||_2^2$ is at most the number of $4$-cycles $C_4(U')$, which is bounded from above by 
\[(1+o(1))\frac{|U'|^4d^4}{n^4}+O\left(\frac{\lambda^2|U'|^2d}{n}\right).\]
In other words, we can conclude that 
\[H_3^-(U')\le \frac{H_1(U')|U'|d^2}{n^2}+\lambda \frac{|U'|^5d^5}{n^5}+\lambda^2 \frac{|U'|^4d^{7/2}}{n^{7/2}}+\lambda^3\frac{|U'|^3d^{5/2}}{n^{5/2}}+\lambda^4\frac{|U'|^2d}{n}.\]
Applying Theorem \ref{count-c1} completes the proof.
\end{proof}

\begin{remark}
In practice, configurations in Theorems \ref{count-c3} and \ref{count-c4} might not exist. 
\end{remark}


\section{Proof of Theorem \ref{thm1}}\label{theorem 1 proof section}
To prove Theorem \ref{thm1}, we need to show that there exists a set of two distinct points, say $x_1, x_2\in U$, such that the restriction of $\mathcal{H}$ to $U$ yields every possible function from $\{x_1, x_2\}$ to $\{0, 1\}$. This is equivalent to saying that there exists $u^*, u_{1}, u_2, u_{12}\in U$ such that 
\[x_1\sim u_{12},x_2\sim u_{12},\]
\[x_1\sim u_1, x_2\not\sim u_1,\]
\[x_1\not\sim u_2,x_2\sim u_2,\]
and 
\[x_1\not\sim u^*,x_2\not\sim u^*.\]

We first prove the existence of $u_{1}, u_2, u_{12}$ which are joined to $x_1$ and $x_2$ in the prescribed way. By Lemma \ref{max degree lemma} and Theorem \ref{count-c1}, we may choose a subset $U'\subset U$ with $|U'| =(1-o(1)) |U|$ such that no vertex in $U'$ has degree more than $\frac{2|U|d}{n}$ in $U$ and that satisfies 
\[
\left\vert H_1(U')-\frac{|U'|^5d^5}{n^5} \right\vert\ll \lambda^2\frac{|U'|^3d^2}{n^2}+\lambda\frac{|U'|^4d^4}{n^4}.
\]We will show that $x_1, x_2, u_1, u_2, u_{12}$ may be found in $U'$. Afterwards, we will show that by the definition of $U'$ there will be a vertex $u^*\in U$ which is adjacent to neither $x_1$ nor $x_2$. Hence it suffices to show the existence of these $5$ vertices in $U'$ with the prescribed edges and non-edges.

We do this by finding a path on $5$ vertices $u_1\sim x_1\sim u_{12}\sim x_2\sim u_2$. Such a path will satisfy our requirements if and only if we have $u_1\not\sim x_2$ and $u_2\not\sim x_1$. In order to guarantee the existence of such a path we count all paths and then count paths where at least one of the forbidden edges is present. 

We know from Proposition \ref{paths} that the number of paths on $5$ vertices in $U'$ with it at least 
\begin{equation}\label{number of 4 paths}
    (1-o(1))\frac{|U'|^5 d^4}{n^4}, 
\end{equation}

because of the restriction that $\frac{\lambda n}{d} = o(|U|)$. Note that there are also this many paths where all $5$ vertices are distinct, again by Proposition \ref{paths}. Such a path will satisfy our requirements unless $x_1\sim u_2$ or $x_2\sim u_1$. If either of these forbidden edges occurs, we have found configuration $H_1$.

By the definition of $U'$ we have that 
\[
H_1(U') \leq \frac{|U'|^5d^5}{n^5} + O\left( \lambda^2\frac{|U'|^3d^2}{n^2}+\frac{\lambda|U'|^4d^4}{n^4}\right),
\]
This quantity is much smaller than the number of paths in \eqref{number of 4 paths} by the hypothesis on the size of $U$. Choosing any vertex $u^*\in U$ which is not in $N(x_1)\cup N(x_2)$ completes the proof. We are guaranteed that such a vertex exists, as $x_1,x_2\in U',$ so neither has more than $\frac{2|U|d}{n}$ neighbors in $U'$. So $|N(x_1)\cup N(x_2)|\leq \frac{4|U|d}{n},$ leaving at least $|U'|-\frac{4|U|d}{n}$ choices for $u^*$ in $U'\setminus (N(x_1)\cup N(x_2)).$ The number of choices of $u^*$ is then strictly positive by our assumptions.

\section{Proof of Theorem \ref{thm2}}\label{theorem 2 proof section}
The proof of Theorem \ref{thm2} is a bit involved, so we first give a sketch.
\subsection{Sketch of proof}

To prove the VC-dimension is at least $3$, we need to find vertices $x_1, x_2, x_3, y_1, y_2, y_3, y_{12}, y_{13}, y_{23}, y_{13}, y^*\in U$ with $x_1, x_2, x_3$ distinct such that 
\begin{enumerate}
    \item $x_1\sim y_{123}$, $x_2\sim y_{123}$, $x_3\sim  y_{123}$
    \item $x_1\sim y_{12}$, $x_{2}\sim y_{12}$, $x_3\not\sim y_{12}$, and similarly for $y_{13}$ and $y_{23}$. 
    \item $x_1\sim y_1$, $x_2\not\sim y_1$, $x_3\not\sim y_1$, and similarly for $y_2$ and $y_3$. 
    \item $x_1\not\sim y^*$, $x_2\not\sim y^*$, $x_3\not\sim y^*$. 
\end{enumerate}

If we can find vertices in such a configuration, then the set $\{h_{x_1}, h_{x_2}, h_{x_3}\}$ is shattered by $\mathcal{H}(U)$ and this shows that the VC-dimension of $U$ is at least $3$. We again note that for the definition of shattering, it does not matter what adjacency or non-adjacency we see between $x_i$ and $x_j$ or between any of the $y$ vertices. We also note that for the definition of shattering to be satisfied, it is possible that some $y$ and some $x_j$ are the same vertex. However, we will avoid this situation because the possible presence of loops in our graphs make it hard to analyze. To prove that we can find a set of vertices with the above properties, we use the count of configurations in Section \ref{configurations section}. 

To prove Theorem \ref{thm2}, we do the following steps. 

{\bf Step 1:} We first pass to a subset $U'$ where we may count the configurations from Section \ref{configurations section}. Then we argue that it is enough to find vertices $x_1, x_2, x_3, y_{123}, y_{12}, y_{23}, y_{13}$ in $U'$.

{\bf Step 2:} Next we count the number of configurations $H_1$ and use Cauchy-Schwarz to lower bound the number of tuples satisfying

\begin{itemize}
	\item $x_1\sim  y_{123}, x_2 \sim y_{123}, x_3 \sim y_{123}$
	\item $x_1 \sim y_{12}, x_2 \sim y_{12}$
	\item $x_1 \sim y_{13}, x_3\sim y_{13}$
	\item $x_2 \sim y_{23}, x_3 \sim y_{23}$
	\item $x_2 \not\sim y_{13}$
	\item $x_1 \neq x_2$, $x_3 \neq x_2$
	\item $y_{123} \neq y_{12}$, $y_{123} \neq y_{23}$
\end{itemize}

{\bf Step 3:}

In the previous step, some of the vertices may overlap. Since we must have that $x_1\not=x_3$, next we show that the number of configurations in Step $2$ with $x_1=x_3$ is much smaller than the total.

{\bf Step 4:}

After the step $3$, we get a lower bound on the number of tuples satisfying
\begin{itemize}
	\item $x_1\sim  y_{123}, x_2 \sim y_{123}, x_3 \sim y_{123}$
	\item $x_1 \sim y_{12}, x_2 \sim y_{12}$
	\item $x_1 \sim y_{13}, x_3\sim y_{13}$
	\item $x_2 \sim y_{23}, x_3 \sim y_{23}$
	\item $x_2 \not\sim y_{13}$
	\item $x_1 \neq x_2$, $x_3 \neq x_2$, $x_1\ne x_3$
	\item $y_{123} \neq y_{12}$, $y_{123} \neq y_{23}$
\end{itemize}

{\bf Step 5:}
For those tuples in the Step $4$, we need to remove tuples with $y_{12}\sim x_3$ and $y_{23}\sim x_1$ and tuples with $x_2 = y_{12}$ or $x_3 = y_{12}$. If there is an adjacency we have the configuration $H_3^{xu '}$ and if there is an overlap of vertices we have the configuration $H_3^-$.

{\bf Step 6:}

Using the assumption in the statement of the theorem, we conclude the proof.
\subsection{Details}

\begin{proof}[Proof of Theorem \ref{thm2}]
First we pass to a subset $U'$ where our configurations are well-behaved. By Lemma \ref{max degree lemma}, we may consider a subset $U'\subset U$ such that all vertices in $U'$ have between $\frac{1}{2}\frac{|U|d}{n}$ and $2\frac{|U|d}{n}$ neighbors in $U$. 
{By Remark \ref{remark1} and Theorem \ref{count-c3}, we may assume that $H_1(U')$ is close to its expected value, that $H_2(U') = o(H_1(U'))$, and that $H_3^+(U') \leq \epsilon(|U|^7d^9/n^9 + H_3(U'))$ where $\epsilon$ can be taken arbitrarily small by choosing $C$ large enough in the hypotheses of the theorem. } 

We focus on the existence of the vertices $x_1, x_2, x_3, y_{123}, y_{12}, y_{23}, y_{13}$ in $U'$. If we can find such a subconfiguration, then by the assumption in the hypotheses of the theorem, we may find vertices $y_1, y_2, y_3$. Since $|U|$ is much larger than $\frac{|U|d}{n}$ and the vertices in $U'$ have at most $2\frac{|U|d}{n}$ neighbors in $U$, we may also find the vertex $y^*$ which completes the configuration, as in the final step of the proof of Theorem \ref{thm1}. Therefore it suffices to find the vertices $x_1, x_2, x_3, y_{123}, y_{12}, y_{23}, y_{13}$ in $U'$ satisfying the adjacency and non-adjacency conditions. Note that this is exactly configuration $H_3$.

Since each vertex in $U'$ has at most $2|U|d/n$ neighbors, the number of $3$-stars is at most $8|U|^4d^3/n^3$. Under the condition $\lambda n/d = o(|U|)$, we also know from Proposition \ref{paths} that the number of $3$-paths is $(1+o(1))|U|^4d^3/n^3$ and from Theorem \ref{cycle-main1} that $C_4(U) = (1+o(1))\frac{|U|^4 d^4}{n^4}$. For any $\{x,y,z,u,v\}$ with $x\sim y, y\sim z, z\sim u, u\sim x, u\sim v$ (that is, that form configuration $H_1$), if $x=z$ we have a $P_3$, if $y=u$ we have a star on $3$ edges, if $v\in \{x,y,z\}$ we have a $4$-cycle, and if $y\sim v$ we have configuration $H_2$. 

So, it follows from Remark \ref{remark1} that  the number of tuples in $U'$ satisfying
\begin{equation}\label{c1 prime definition}x\sim y, y\sim z, z\sim u, u\sim x, u\sim v, y\not\sim v, x\ne z, y\ne u, v\not\in \{x,y,z\}\end{equation}
is at least $(1-\epsilon)|H_1|$ as long as 
$|U'|\geq C\lambda (n/d)^{3/2}$, for any $\epsilon$ and a large enough constant $C$.

We denote the set of those tuples by $H_1'$. We will define the notation $H_1'(x, y, z, u, v)$ as the indicator that $x,y,z,u,v$ satisfy \eqref{c1 prime definition}, that is that they form a configuration in $H_1'$.
Define 
\[f(y, z, v)=\sum_{x, u\in U'}H_1'(x, y, z, u, v).\]
Then we have that 
\[
\sum_{y,z,v, ~y\sim z}(f(y,z,v))^2 = H_3(U').
\]
We will complete the proof by bounding this number from below and then showing that the number of these homomorphisms which are either not injective or which have an edge that is forbidden is smaller than the total number. Proceeding with this, we have
\[\frac{|U|^{10}d^{10}}{n^{10}}\ll|H_1'|^2=\left(\sum_{y, z, v\in U, ~y\sim z}f(y, z, v)\right)^2\le \sum_{y,z,v} f(y, z, v)^2\cdot \sum_{y, z, v}1_{y\sim z}.\]
Notice that 
\[\sum_{y, z, v}1_{y\sim z}\ll |U'|\cdot \frac{|U'|d}{n}\cdot |U'|, \]
by the degree restriction on $U'$. This means that 
\[\sum_{y,z,v} f(y, z, v)^2\gg \frac{|U'|^7d^9}{n^9}.\]

Given a fixed $y\sim z$ and $v$, the quantity $(f(y,z,v))^2$ counts the number of $4$-tuples $(x,u,x',u')$ so that $H_1'(x,y,z,u,v) = H_1'(x',y,z,u',v) = 1$. In Figure \ref{ref_label_overall}, we think of $y,z,v$ as playing the roles of $x_1, y_{123}, y_{23}$ respectively, and we hope for a $4$-tuple $(x,u,x',u')$ that play the roles of $(y_{13}, x_3, y_{12}, x_2)$. By the definition of $H_1'$, we have that each $\{x,y,z,u,v\}$ forming a copy of $H_1'$ satisfies $x\not=z$, $y\not=u$, $v\not\in \{x,y,z\}$, and $y\not\sim v$. 

This means that the number of tuples $(x_1, x_2, x_3, y_{12}, y_{13}, y_{23}, y_{123})\in (U')^7$ such that 
\begin{itemize}
    \item $x_1\sim y_{123}, x_2\sim y_{123}, x_3\sim y_{123}$
    \item $x_1\sim y_{12}, x_2\sim y_{12}$
    \item $x_1\sim y_{13}, x_3\sim y_{13}$
    \item $x_2\sim y_{23}, x_3\sim y_{23}$
    \item $x_1\ne x_2$, $x_1\ne x_3$
    \item $y_{123}\ne y_{12}$, $y_{123}\ne y_{13}$, $y_{123}\ne y_{23}$.
    \item $y_{23} \not\in \{y_{13}, y_{12}, x_1, x_2\}$
    \item $x_1\not\sim y_{23}$
\end{itemize}
is at least $\Omega\left( |U|^7d^9/n^9\right)$. Such a $7$-tuple satisfies all of our conditions unless one of the following occurs. 

\begin{itemize}
    \item $y_{12}=y_{13}$
    \item $x_2 = x_3$
    \item $x_2 = y_{13}$
    \item $x_3 = y_{12}$
    \item $x_2 \sim y_{13}$
    \item $x_3 \sim y_{12}$
    
\end{itemize}

If $x_2 \sim y_{13}$ or $x_3\sim y_{12}$, we get a configuration isomorphic to $H_3^+$. Hence by Theorem \ref{count-c3}, the number of these is at most $\epsilon(|H_3(U')|)$ whenever 
\begin{equation}\label{two}|U'|\geq C\lambda (n/d)^{13/7},\end{equation}

for any $\epsilon$ and a large enough $C$. If $x_2 = x_3$, then the vertices $\{x_1, x_2, y_{12}, y_{13}, y_{123}\}$ form configuration $H_2$ and furthermore $y_{23}\sim x_2$. Thus the number of $6$-tuples of this form is bounded above by $|H_2(U')|\cdot \frac{2|U|d}{n}$. By Theorem \ref{count-H_2}, this is at most 

\[
O\left( \frac{|U|^6d^7}{n^7} + \lambda^2 \frac{|U|^4 d^3}{n^3}\right).
\]
This is smaller than $\epsilon |U'|^7d^9/n^9$ when 
\begin{equation}\label{three}|U|\geq C\lambda^{2/3}(n/d)^2,\end{equation}
for a large enough absolute constant $C$. Similarly, if $y_{12} = y_{13}$, then the vertices $\{y_{12}, y_{123}, x_1, x_2, x_3\}$ form a $H_2$ and additional $y_{23}$ is adjacent to $x_2$ (and $x_3)$. Hence the number of $6$-tuples of this form is also bounded above by 
\[
O\left( \frac{|U|^6d^7}{n^7} + \lambda^2 \frac{|U|^4 d^3}{n^3}\right).
\]

Finally, if $x_2 = y_{13}$ or equivalently if $x_3 = y_{12}$, then we have configuration $H_3^-$. By Theorem \ref{count-c4}, the number of these is at most 
\[
O\left( \frac{|U'|^6d^7}{n^7} + \lambda^2\frac{|U'|^4d^4}{n^4}+\lambda \frac{|U'|^5d^5}{n^5}+\lambda^2 \frac{|U'|^4d^{7/2}}{n^{7/2}}+\lambda^3\frac{|U'|^3d^{5/2}}{n^{5/2}}+\lambda^4\frac{|U'|^2d}{n}\right),
\]
which is smaller than $|U'|^7d^9/n^9$ when 
\begin{equation}\label{four}
|U| \geq C\max\{\lambda^{2/3}(n/d)^2, \lambda (n/d)^{3/2}, ~\lambda^{3/4}(n/d)^{13/8}, ~\lambda^{4/5}(n/d)^{8/5} \}\geq C\max\left\lbrace \lambda^{2/3}(n/d)^2, \lambda (n/d)^{13/7}\right\rbrace.
\end{equation}

Therefore if $C$ is chosen to be a large enough absolute constant, we have that there is a $7$-tuple such that none of the forbidden adjacencies or vertex identifications occur, and therefore the VC-dimension of $\mathcal{H}(U)$ is at least $3$ under 
\[|U| \geq C\max\left\lbrace \lambda^{2/3}(n/d)^2, \lambda (n/d)^{13/7}\right\rbrace.\]

\end{proof}




\section{Proofs of Theorems \ref{app1}, \ref{app0}, and \ref{app2}}\label{application section}
The proofs of Theorems \ref{app1} and \ref{app2} are almost the same, so we only present one of them.
\begin{proof}[Proof of Theorem \ref{app1}]
The only detail we need to check is the first condition, namely, given any three vertices $v_1, v_2, v_3$ in $U$, we can find three vertices $u_1, u_2, u_3$ in $U$ such that $u_i\cdot v_j=1$ if and only if $i=j$. 

Using the fact that any two distinct hyperplanes in $\mathbb{F}_q^t$ intersect in at most $q^{t-2}$ points. This means that $|N(v_i)\cap N(v_j)|\le q^{t-2}$. On the other hand, by Lemma \ref{max degree lemma} we may pass to a subset $U'$ such that for each $i$, $|N(v_i) \cap U'| = \Theta( q^{-1}|U|)$. So, the condition $|U|\geq Cq^{t-1}$ is enough to guarantee that we will find such vertices $v_1, v_2, v_3$ as long as $C$ is chosen large enough.     
\end{proof}

\begin{proof}[Proof of Theorem \ref{app0}]
In this particular setting, we can use the geometric properties to improve the argument in the proof of Theorem \ref{thm2}. We will run the same proof as in Theorem \ref{thm2} except we also remove all unit vectors from $U$ when defining $U'$. Since there are at most $O(q^2)$ unit vectors in $\mathbb{F}_q^3$, we still have that $|U'| = (1+o(1))|U|$. Now we use geometric properties to show that for some of the configurations, we either have better upper bounds or they do not exist.

More precisely, 
 when estimating copies of configuration 2, the fact that the intersection of two distinct planes is either a line or a null set, we have that any two vertices in the graph have at most $q$ common neighbors. We may therefore use Theorem \ref{thm:gamma} and this implies that the condition (\ref{three}) is replaced by
\[|U'|\geq Cq^{5/2}.\]
Next we show that any copy of $H_3^-$ contains a unit vector and hence by the definition of $U'$ we may ignore the configuration. Assume there is a copy of $H_3^-$ with labels as in Figure \ref{ref_label_overall}, so $x=u'$. By the adjacencies, we have that the planes $\{w: v\cdot w=1\}$, $\{w: x\cdot w=1\}$, and $\{w: z\cdot w=1\}$ all contain $u$. Since the intersection of three planes is either a line or is empty, the intersection must be a line. But the line containing $u$ and $x$ is the intersection of the planes $\{w: v\cdot w=1\}$ and $\{w: z\cdot w=1\}$, and hence the line containing $x$ and $u$ is also the intersection of all three planes. But this now implies that $x\cdot x = 1$. Hence, the condition (\ref{four}) is not needed. 

With a similar argument and the fact that $x_1\not\sim y_{23}$, we can check that the configuration $H_3^+$ does not appear (this argument appears at the end of the paper \cite{IMS}). So the condition (\ref{two}) is not needed. 

In conclusion, in this setting, we only need the condition $|U'|\geq C q^{5/2}$ to ensure that the dimension is at least three. 

To see the upper bound on the VC-dimension in $\mathbb F_q^3$, notice that no quadruple of points can be shattered, as any three of them determine a plane. Since dot-products are constant on planes, any quadruple of points that have the same non-zero dot product with a point will lie on a plane. Thus any triple of them will lie on the same plane. So if any three of the points from the quadruple determine a given non-zero dot product with a point in $\mathbb F_q^3,$ so will the fourth, and we cannot shatter any set of four points.

This completes the proof. 
\end{proof}

\section{Acknowledgements}
T. Pham would like to
thank to the VIASM for the hospitality and for the excellent working conditions. M. Tait was partially supported by National Science Foundation grant DMS-2011553.

\bibliographystyle{amsplain}

\bibliography{bib}

\providecommand{\bysame}{\leavevmode\hbox to3em{\hrulefill}\thinspace}
\providecommand{\MR}{\relax\ifhmode\unskip\space\fi MR }
\providecommand{\MRhref}[2]{%
  \href{http://www.ams.org/mathscinet-getitem?mr=#1}{#2}
}
\providecommand{\href}[2]{#2}
\begin{thebibliography}{10}

\bibitem{ABC}
Martin Anthony, Graham Brightwell, and Colin Cooper, \emph{The
  {V}apnik-{C}hervonenkis dimension of a random graph}, Discrete Mathematics
  \textbf{138} (1995), no.~1-3, 43--56.

\bibitem{VC1}
Ruben Ascoli, Livia Betti, Justin Cheigh, Alex Iosevich, Ryan Jeong, Xuyan Liu,
  Brian McDonald, Wyatt Milgrim, Steven~J Miller, Francisco~Romero Acosta,
  et~al., \emph{{V}{C}-dimension and distance chains in $\mathbb{F}_q^d$},
  arXiv preprint arXiv:2210.03058 (2022).

\bibitem{bannai2004finite}
Eiichi Bannai, Osamu Shimabukuro, and Hajime Tanaka, \emph{Finite analogues of
  non-{E}uclidean spaces and {R}amanujan graphs}, European Journal of
  Combinatorics \textbf{25} (2004), no.~2, 243--259.

\bibitem{VC2}
D~Fitzpatrick, A~Iosevich, B~McDonald, and E~Wyman, \emph{The {V}{C}-dimension
  and point configurations in $\mathbb{F}_q^2$}, arXiv preprint
  arXiv:2108.13231 (2021).

\bibitem{haemers}
Wilhelmus~Hubertus Haemers, \emph{Eigenvalue techniques in design and graph
  theory},  (1980).

\bibitem{IMS}
A~Iosevich, B~McDonald, and M~Sun, \emph{Dot products in $\mathbb{F}_q^3$ and
  the {V}apnik-{C}hervonenkis dimension}, Discrete Mathematics \textbf{346}
  (2023), no.~1, 113096.

\bibitem{KS}
Michael Krivelevich and Benny Sudakov, \emph{Pseudo-random graphs}, More sets,
  graphs and numbers, Springer, 2006, pp.~199--262.

\bibitem{kwok1992character}
Wing~Man Kwok, \emph{Character tables of association schemes of affine type},
  European journal of combinatorics \textbf{13} (1992), no.~3, 167--185.

\bibitem{matousek2013lectures}
Jiri Matousek, \emph{Lectures on discrete geometry}, vol. 212, Springer Science
  \& Business Media, 2013.

\bibitem{MSW}
Brian McDonald, Anurag Sahay, and Emmett~L Wyman, \emph{The {V}{C}-dimension of
  quadratic residues in finite fields}, arXiv preprint arXiv:2210.03789 (2022).

\bibitem{PSTT}
Thang Pham, Steven Senger, Michael Tait, and Vu~Thi~Huong Thu, \emph{Geometric
  structures in pseudo-random graphs}, arXiv preprint arXiv:2208.04399 (2022).

\bibitem{vinh2014sovability}
Le~Anh Vinh, \emph{The solvability of norm, bilinear and quadratic equations
  over finite fields via spectra of graphs}, Forum mathematicum, vol.~26,
  Walter de Gruyter GmbH, 2014, pp.~141--175.

\end{thebibliography}
\end{document}